
\documentclass[10pt]{article}


\setlength{\topmargin}{-1.cm} \setlength{\headsep}{1.6cm}
\setlength{\evensidemargin}{1.4cm} \setlength{\oddsidemargin}{1.4cm}
\setlength{\textheight}{22.cm}
\setlength{\textwidth}{14.0cm}


\usepackage{mathptmx}
\usepackage{wrapfig}
\usepackage{eucal}
\usepackage{amsmath}
\usepackage{amscd}
\usepackage{amssymb}
\usepackage{amsthm}
\usepackage{xspace}
\usepackage[all,tips]{xy}
\usepackage[dvips]{graphicx}
\usepackage{graphics}
\usepackage{epsfig}
\usepackage{verbatim}
\usepackage{syntonly}
\usepackage{hyperref}
\usepackage{amssymb, color, url}


\newtheorem{introtheorem}{Theorem}
\newtheorem{theorem}{Theorem}[section]
\newtheorem{proposition}[theorem]{Proposition}

\newtheorem{corollary}[theorem]{Corollary}
\newtheorem{lemma}[theorem]{Lemma}


\input xy
\xyoption{all}


\DeclareMathOperator{\SL}{SL}

\DeclareMathOperator{\G}{\Phi}


\DeclareMathOperator{\Z}{\mathbb{Z}}


\def\mf#1{\mathfrak{#1}}

\def\Z{\mathbb{Z}}
\def\N{\mathbb{N}}


\usepackage{fancyhdr}

\pagestyle{fancy} \fancyhf{} \fancyhead[R]{\bfseries \thepage}
\fancyhead[L]{\bfseries Full residual finiteness growths of nilpotent groups}
\fancypagestyle{plain}


\begin{document}


\title{\textbf{Full residual finiteness growths of nilpotent groups}}
\author{Khalid Bou-Rabee\thanks{K.B. supported in part by NSF grant DMS-1405609}\and Daniel Studenmund}
\maketitle


\begin{abstract}
Full residual finiteness growth of a finitely generated group $G$ measures how efficiently word metric $n$-balls of $G$ inject into finite quotients of $G$.
We initiate a study of this growth over the class of nilpotent groups.
When the last term of the lower central series of $G$ has finite index in the center of $G$ we show that the growth is precisely $n^b$, where $b$ is the product of the nilpotency class and dimension of $G$.
In the general case, we give a method for finding an upper bound of the form $n^b$ where $b$ is a natural number determined by what we call a terraced filtration of $G$.
Finally, we characterize nilpotent groups for which the word growth and full residual finiteness growth coincide.
\end{abstract}
\vskip.1in
{\small{\bf keywords:}
\emph{residually finite, nilpotent, full residual finiteness, growth.}}

\section*{Introduction}

Let $G$ be a residually finite group endowed with a word metric given by a finite generating set $X$.
A subset $S \subseteq G$ is \emph{fully detected} by a group $Q$ if there exists a homomorphism $\varphi: G \to Q$ such that $\varphi |_S$ is injective.
For a natural number $n$, set $\G_G^X(n)$ to be the minimal order of a group $Q$ that fully detects the ball of radius $n$ in $G$ (first studied in \cite{BM11}).
The \emph{full residual finiteness growth of $G$ with respect to $X$}
is the growth of the function $\G_{G}^{X}$, that is, its equivalence
class under the
equivalence relation defined by $f\approx g$ if and only if there is a
constant $C$ so that $f(n) \leq Cg(Cn)$ and $g(n)\leq Cf(Cn)$ for all
natural numbers $n$.
The growth of $\G_G^X$ is independent of choice of generating set $X$ (see Lemma \ref{lem:containment}).  Therefore
full residual finiteness growth is an invariant of a finitely
generated group, and can be denoted simply $\G_G$.

This article focuses on finitely generated nilpotent groups. 
While it is known that \emph{word growth} (defined below after Theorem
\ref{MainTheorem}) has precisely polynomial growth over this class of
groups \cite{Bass72}, computing other growth functions
for this class has proved to be a serious task. 
Indeed, even computing the answer for \emph{subgroup growth} \cite{MR1978431} in the two-generated free nilpotent case takes work; see \cite{MR2342452}.
The main difficulty lies in that the structure of $p$-group quotients of a fixed finitely generated nilpotent group can depend heavily on the choice of the prime $p$.
That is, it is difficult to draw global behavior (behavior over all finite quotients) from local behavior (behavior over all finite quotients that are $p$-groups).
Moreover, comparisons between full residual finiteness growth and
 word growth, which is to our knowledge the only nontrivial growth
 function known to have precisely polynomial growth over the class of
 nilpotent groups, do not allow one to immediately draw much information on $\G_G^X$.
In fact, the growth of $\G_G^X$ is often, but not always, strictly larger than the word growth of $G$ (see Theorem~\ref{thm:abelian}).
Obtaining \emph{sharp} control of full residual finiteness growth
over this class requires new understanding of the structure theory of
nilpotent groups.

To present our findings, we begin with some basic examples. In \S\ref{sec:examples}, we show that $\G_G(n) = n^k$ for $G=\Z^k$ and
$\G_G(n) = n^6$ for $G$ equal to the discrete Heisenberg group. The key
property shared by these examples is that the center of $G$ is equal
to the last term of the lower central series of $G$. In
fact, we explicitly compute full residual
finiteness growths for all groups satisfying a slightly weaker condition.
To make this precise we introduce notation: for a nilpotent group $G$
of class $c$ we denote by $\gamma_c(G)$ the last nontrivial term of
its lower central series, by $Z(G)$ its center, and by $\dim(G)$ its
dimension. (See \S\ref{sec:nilpotent} for more explicit definitions.)

\begin{introtheorem} \label{theorem:zl} Let $G$ be a finitely generated
  nilpotent group of class $c$ with $[Z(G):\gamma_c(G)] < \infty$.
  Then
  $$
  \G_G(n) \approx n^{c \dim(G)}.
  $$
\end{introtheorem}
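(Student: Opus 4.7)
The plan is to prove matching upper and lower bounds $n^{c\dim(G)} \preceq \G_G(n) \preceq n^{c\dim(G)}$. For the upper bound, I fix a Mal'cev basis $u_1, \ldots, u_h$ of $G$ (modulo torsion) adapted to the lower central series, with $u_j \in \gamma_{w(j)}(G) \setminus \gamma_{w(j)+1}(G)$ and weights $w(j) \in \{1, \ldots, c\}$. Setting $N = \lceil A\,n^c \rceil$ for a sufficiently large constant $A$ depending only on $G$, let $K$ be the normal closure in $G$ of $\{u_j^N : 1 \leq j \leq h\}$. A routine induction on $c$ using commutator collection in Mal'cev coordinates gives $|G/K| \leq N^h = O(n^{c\dim(G)})$; moreover each element of $B_{2n}$ has $j$-th Mal'cev coordinate bounded by $O(n^{w(j)}) \leq O(n^c)$, so choosing $A$ large forces $K \cap B_{2n} = \{e\}$ and the map $G \twoheadrightarrow G/K$ fully detects $B_n$.

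For the lower bound, the key technical input enabled by the hypothesis $[Z(G) : \gamma_c(G)] < \infty$ is the following nondegeneracy lemma: for every Mal'cev basis element $u_j$ with $w(j) < c$, there exist $y_1, \ldots, y_{c-w(j)} \in G$ such that $[u_j, y_1, \ldots, y_{c-w(j)}]$ is a nontrivial element of $\gamma_c(G)$ whose Mal'cev norm is bounded by a constant depending only on $G$. I would prove this by passing to the Mal'cev completion over $\Q$: the hypothesis gives $Z(\lieg_\Q) = \gamma_c(\lieg_\Q)$, and then induction on $c - w(j)$, using the Jacobi identity together with the fact that the graded Lie algebra of $G$ is generated in degree one, produces the required $y_i$.

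Given $\varphi \colon G \to Q$ fully detecting $B_n$ with $K = \ker\varphi$, the main algebraic identity is $[u_j^m, y_1, \ldots, y_{c-w(j)}] = [u_j, y_1, \ldots, y_{c-w(j)}]^m$ in $\gamma_c(G)$, which holds because $\gamma_c(G)$ is central and iterated commutators of length $c$ are multilinear at top weight. Let $m_j$ be the least positive integer with $u_j^{m_j} \in \gamma_{w(j)+1}(G) \cdot K$. Applied with $y_i$'s from the nondegeneracy lemma, the identity places $[u_j, y_1, \ldots, y_{c-w(j)}]^{m_j}$ in $K \cap \gamma_c(G)$ with Mal'cev norm $O(m_j)$ and hence word length $O(m_j^{1/c})$; the avoidance $K \cap B_{2n} = \{e\}$ then forces $m_j \geq A' n^c$ for every $j$ with $w(j) < c$, while the case $w(j) = c$ follows directly from $\gamma_c(G) \cap B_n$ containing $\Omega(n^{c\dim\gamma_c(G)})$ elements injected into $\varphi(\gamma_c(G))$.

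To conclude I convert these orders into the bound $|Q| \geq A''\,n^{c\dim(G)}$. Writing $L_i$ for the kernel of the surjection $\gamma_i(G)/\gamma_{i+1}(G) \twoheadrightarrow \varphi(\gamma_i(G))/\varphi(\gamma_{i+1}(G))$, the aim is to show $L_i$ contains no nonzero vector of sup-norm at most $A''' n^c$: given such $v$, lifting to $\tilde v \in \gamma_i(G)$ and multilinearly expanding $[\tilde v, y_1, \ldots, y_{c-i}]$ at top weight produces an element of $K \cap \gamma_c(G)$ of word length $O(n)$, contradicting $K \cap B_{2n} = \{e\}$ unless the bracket vanishes, which the nondegeneracy lemma rules out for a suitable choice of $y_i$. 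Minkowski's theorem then gives $|\varphi(\gamma_i(G))/\varphi(\gamma_{i+1}(G))| \geq A'''' n^{c\dim(\gamma_i(G)/\gamma_{i+1}(G))}$, and multiplying over $i$ yields the result. The main obstacle is this final step: ensuring the multilinear expansion is nonzero on every short nonzero $v$ (not just axis-aligned ones) requires choosing the $y_i$'s uniformly in $v$, which is precisely where the hypothesis $[Z(G):\gamma_c(G)] < \infty$ is most essential.
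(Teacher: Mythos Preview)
Your upper bound matches the paper's. For the lower bound, however, your nondegeneracy lemma is false as stated, and this breaks both the $m_j$ bound and the Minkowski step. The hypothesis $[Z(G):\gamma_c(G)]<\infty$ gives $Z(\lieg_\Q)=\gamma_c(\lieg_\Q)$, but it does \emph{not} force the center of the associated graded $\mathrm{gr}(\lieg_\Q)$ to equal its top-degree piece, which is what your induction (iterated brackets of length exactly $c-w(j)$ landing nontrivially in $\gamma_c$) actually requires. Concretely, take the $6$-dimensional Lie algebra with basis $a,b,v,p,q,r$ and nonzero brackets $[a,b]=p$, $[a,p]=q$, $[b,p]=r$, $[v,a]=q$. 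One checks $\gamma_3=\langle q,r\rangle=Z(\lieg)$, so $c=3$ and the hypothesis holds; yet $v$ has weight $1$ while $[v,\lieg]\subseteq Z(\lieg)$, so every $2$-fold bracket $[v,y_1,y_2]$ vanishes. In the corresponding group your length-$(c-w(j))$ commutator test is vacuous for $v$, and your Minkowski argument for $L_1$ cannot exclude the short vector $\bar v$.

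The paper sidesteps this by organizing the lower bound around the \emph{upper} central series rather than the lower one. For $g\in\zeta_i(G)\setminus\zeta_{i-1}(G)$ the existence of $x_1,\ldots,x_{i-1}$ with $[g,x_1,\ldots,x_{i-1}]\in Z(G)\setminus\{1\}$ is immediate from the definition of $\zeta_i$, so no nondegeneracy lemma is needed; the hypothesis $[Z(G):\gamma_c(G)]<\infty$ enters only afterward, to guarantee that this central commutator sits in (a finite-index overgroup of) $\gamma_c(G)$ and therefore has $G$-word length comparable to the $c$-th root of its intrinsic length. In the counterexample $v\in\zeta_2\setminus\zeta_1$, so a \emph{single} commutator $[v^m,a]=q^m\in\gamma_3$ already yields $m\gtrsim n^3$. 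Your strategy is salvageable if you replace the lower-central weight $w(j)$ by the upper-central height throughout, letting the number of commutators be $i-1$ for $g\in\zeta_i\setminus\zeta_{i-1}$; this is effectively what the paper does when it builds the injected set $B^+_{B^t}(Dn^c)$ from a basis adapted to the upper central series.
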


The conclusion of Theorem \ref{theorem:zl} does not generally hold when $[Z(G) :
\gamma_c(G)] = \infty$. This is seen by taking $G$ to be the
direct product of the discrete Heisenberg group with $\Z$, which
satisfies $\G_G(n) \approx n^7$ while $c=2$ and $\dim(G) = 4$.
Groups not satisfying the hypothesis of Theorem \ref{theorem:zl} are
generally more complicated than this example.
For instance, in Proposition \ref{D22example} we provide an example of
a nilpotent group $\Gamma$ of class $c=3$ with $\dim(\Gamma) = 8$ and
$\G_\Gamma(n) \approx n^{22}$ that does not split as a direct product.

For general nilpotent group $G$ of class $c$, we introduce methods to find an upper
bound on the polynomial degree of $\G_G$.
Define a {\em terraced filtration} of $G$ to be a filtration $1 = H_0 \leq
H_1 \leq \dotsb \leq H_{c-1}\leq G$ where each $H_i$ is a maximal normal
subgroup of $G$ satisfying $H_i\cap \gamma_{i+1}(G) = 1$. 
Every terraced filtration of $G$ gives an explicit polynomial upper bound
on growth of $\G_G$.

\begin{introtheorem} \label{MainTheorem}
Let $G$ be a finitely generated nilpotent group of class $c$.
Suppose $1=H_0\leq H_1 \leq \dotsb H_{c-1}\leq G$ is a terraced
filtration of $G$. Then
\[
\G_G(n) \preceq n^{c \dim(G) - \sum_{i=1}^{c-1} \dim(H_i)}.
\]
\end{introtheorem}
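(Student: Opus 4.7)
The plan is to construct, for each $i = 1, \ldots, c$ (adopting the convention $H_c = G$), a normal subgroup $N_i \trianglelefteq G$ satisfying (a) $H_{i-1} \subseteq N_i$, (b) $N_i \cap H_i \cap B(2n) \subseteq H_{i-1}$, and (c) $[G : N_i] \leq C n^{a_i}$, where $a_i = i\,\dim(H_i/H_{i-1})$ for $i < c$ and $a_c = c\,\dim(G/H_{c-1})$. Given such $N_i$, the diagonal map $\Pi \colon G \to \prod_{i=1}^c G/N_i$ will fully detect $B(n)$: if $g \in B(2n) \setminus \{1\}$ lies in every $N_i$, let $k$ be minimal with $g \in H_k$; then property (b) at $i = k$ forces $g \in H_{k-1}$, contradicting minimality. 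The size of the image of $\Pi$ is at most $\prod_i [G:N_i] \leq C' n^{\sum_i a_i}$, and an elementary telescoping computation yields $\sum_{i=1}^c a_i = c\,\dim(G) - \sum_{i=1}^{c-1}\dim(H_i)$, producing the desired bound on $\G_G(n)$.

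The structural input for building each $N_i$ is that $H_i \cap \gamma_{i+1}(G) = 1$ iterates to $[H_i, G, G, \ldots, G] = 1$ with $i$ factors of $G$, so $H_i \subseteq Z_i(G)$. Passing to the quotient $\bar G_i := G/H_{i-1}$, one checks from $H_i \cap \gamma_{i+1}(G) = 1$ and $H_{i-1} \subseteq H_i$ that $\bar H_i := H_i/H_{i-1}$ satisfies $\bar H_i \cap \gamma_{i+1}(\bar G_i) = 1$; i.e., every nontrivial element of $\bar H_i$ has weight at most $i$ in the lower central filtration of $\bar G_i$. For $i < c$, I would fix a Mal'cev basis of (a torsion-free, finite-index subgroup of) $\bar G_i$ adapted so that $\bar H_i$ is spanned by $\dim(\bar H_i)$ basis vectors of weight at most $i$, and build $N_i/H_{i-1}$ as a congruence-type normal subgroup that trivializes the selected $\bar H_i$-coordinates modulo an integer of order $n^i$ and collapses the transverse coordinates entirely. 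Since the standard Mal'cev length-versus-coordinate bound ensures that elements of $\bar H_i$ of word length at most $2n$ in $\bar G_i$ have the selected coordinates bounded by $C n^i$, such an $N_i$ satisfies (b). For $i = c$, I would instead detect all of $\bar B(2n)$ in $\bar G_c = G/H_{c-1}$ by the analogous congruence construction with level of order $n^c$ across all $\dim(G/H_{c-1})$ coordinates, giving index at most $C n^{c\dim(G/H_{c-1})}$.

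The principal technical obstacle is ensuring that the congruence construction for $i < c$ genuinely produces a \emph{normal} subgroup of $\bar G_i$ with the claimed index $Cn^{i\dim(\bar H_i)}$. As the Heisenberg calculation illustrates, coordinate-wise congruence subgroups typically fail normality unless the moduli on different Mal'cev coordinates satisfy Baker--Campbell--Hausdorff compatibility constraints, which can inflate the index substantially. The strategy is to exploit that only the $\dim(\bar H_i)$ basis directions spanning $\bar H_i$ require nontrivial separation, while the transverse directions may be collapsed; the maximality of $H_{i-1}$ in the terraced filtration is what permits this collapse to be carried out consistently within the group structure. Careful bookkeeping on the BCH interactions among the selected basis vectors, all of weight at most $i$, then gives the index bound and completes the proof.
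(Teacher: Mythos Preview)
Your decomposition strategy is natural, but the index bound $[G:N_i] \leq C n^{i\dim(H_i/H_{i-1})}$ for $i<c$ cannot in general be achieved, and this is not a bookkeeping issue but a structural obstruction. Take the paper's example $\Gamma = U_5/\langle e_{1,4}, e_{1,5}\rangle$ with $H_1=1$ and $H_2 = \langle e_{1,2}, e_{1,3}\rangle$, so that your target is $[G:N_2]\leq C n^{4}$. Since $e_{1,3}\in \gamma_2(\Gamma)\cap Z(\Gamma)$, the intersection $H_2\cap B(2n)$ contains all $e_{1,3}^{b}$ with $|b|\lesssim n^{2}$, so $e_{1,3}$ must have order $\gtrsim n^{2}$ in $\Gamma/N_2$. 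But $e_{1,3}=[e_{1,2},e_{2,3}]$ is central, hence $e_{1,3}^{k}=[e_{1,2}^{k},e_{2,3}]=[e_{1,2},e_{2,3}^{k}]$, forcing both $e_{1,2}$ and $e_{2,3}$ to have order $\gtrsim n^{2}$ in $\Gamma/N_2$. The Heisenberg subgroup $\langle e_{1,2},e_{2,3},e_{1,3}\rangle$ then has image of size $\gtrsim n^{6}$ in $\Gamma/N_2$ (the elements $e_{1,2}^{a}e_{2,3}^{b}e_{1,3}^{c}$ with $0\leq a,b,c < Dn^{2}$ are pairwise distinct by the same commutator trick). So $[G:N_2]\gtrsim n^{6}$, not $n^{4}$, and feeding this back into your product gives at best $n^{6+18}=n^{24}$, strictly worse than the theorem's $n^{22}$. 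The appeal to maximality of $H_{i-1}$ does not help: the obstruction is that detecting the deep part of $\bar H_i$ forces large order on transverse generators tied to it by commutator relations, and no choice of normal $\bar N_i$ can ``collapse'' those directions.

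The paper's proof avoids this by refusing to separate the levels. It takes a \emph{single} normal subgroup
\[
N(Dn)=\prod_{k=1}^{c} H_k^{(Dn)^{k}},
\]
which is automatically normal in $G$ since each $H_k$ is normal and hence each $H_k^{m}$ is normal. Detection of $B(n)$ follows from Lemma~\ref{lem:torsionlengths}, and the index is controlled by observing that $N(Dn)\cap H_k$ already contains $H_k^{(Dn)^{k}}$, so the contribution from each layer $H_k/H_{k-1}$ is at most $|G_k/G_k^{(Dn)^{k}}|\approx n^{k\dim(G_k)}$. The graded levels $n,n^{2},\dotsc,n^{c}$ are nested rather than independent, which is exactly what allows the ``extra'' cost incurred on $e_{2,3}$ in the example above to be absorbed into the top layer instead of being double-counted across a product of quotients.
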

\noindent
This upper bound generally depends on the choice of terraced
filtration. See the comments following the proof of Theorem
\ref{MainTheorem} in \S\ref{MainProofSection} for an explicit example
demonstrating this dependence. It would be interesting to determine
whether the lowest upper bound obtained from a terraced filtration
by Theorem \ref{MainTheorem} is optimal.

Results on distortion in nilpotent groups from Osin \cite{MR1872804} and Pittet \cite{Pittet97} play an important role in all of our proofs.

We also compare full residual finiteness growth to word growth.
Recall that the {\em word growth}, $w_G$, of a finitely generated group $G$ is the
growth of the function $w_G^X(n) = \left\lvert B_G^X(n) \right\rvert$,
which is independent of $X$.
Gromov \cite{MR623534} has characterized nilpotent groups in the class
of finitely generated groups as those for which $w_G$ is polynomial. 
By applying this theorem, it is shown in \cite{BM11} (see Theorem 1.3 there) 
that full residual finiteness growth enjoys the same conclusion.
In spite of this similarity, these two growths rarely coincide.
Our final result characterizes nilpotent groups for which full residual
finiteness growth equals word growth.

\begin{introtheorem} \label{thm:abelian}
Let $G$ be a finitely generated nilpotent group.
Then $\G_G \approx w_G$ if and only if $G$ is virtually abelian.
\end{introtheorem}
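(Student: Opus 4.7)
Since any quotient fully detecting $B_G(n)$ has order at least $|B_G(n)|$, we have $\G_G \succeq w_G$ automatically.  Thus the ``if'' direction reduces to showing $\G_G \preceq w_G$ when $G$ is virtually abelian, and the ``only if'' direction reduces to exhibiting a strict polynomial gap $\G_G \succ w_G$ when it is not.

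Suppose first that $G$ is virtually abelian.  Then $[G,G]$ is finite: otherwise some infinite-order element of $[G,G]$ would be distorted in $G$ by \cite{MR1872804,Pittet97}, contradicting quasi-isometry of $G$ to $\Z^k$.  Hence $\gamma_{i+1}(G)$ is finite for every $i \geq 1$, and $w_G(n) \approx n^{\dim G}$.  Choose a normal torsion-free abelian subgroup $A \triangleleft G$ of finite index by intersecting the conjugates of a finite-index abelian subgroup and discarding torsion, so that $A \cong \Z^{\dim G}$.  Because $A$ is torsion-free and $\gamma_{i+1}(G)$ is finite, $A \cap \gamma_{i+1}(G) = 1$ for every $i \geq 1$; in particular $A$ lies inside some maximal normal subgroup $H_i$ with $H_i \cap \gamma_{i+1}(G) = 1$, and so $\dim H_i = \dim G$.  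Applying Theorem~\ref{MainTheorem} to the resulting terraced filtration $1 = H_0 \leq H_1 \leq \cdots \leq H_{c-1} \leq G$ gives
\[
\G_G(n) \preceq n^{c \dim G - (c-1)\dim G} = n^{\dim G} \approx w_G(n).
\]

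Suppose instead that $G$ is infinite and not virtually abelian, so $[G,G]$ has positive Hirsch rank.  Let $c \geq 2$ be the nilpotency class of $G/T$, where $T$ is the finite torsion subgroup, and choose (after replacing by a bounded power) an element $z \in Z(G) \cap \gamma_c(G)$ of infinite order.  By \cite{MR1872804,Pittet97} we have $|z^m|_G \asymp m^{1/c}$, so for any $\varphi \colon G \to Q$ that fully detects $B_G(n)$ the image $\varphi(z)$ has order at least $C_1 n^c$.  Express $z = [y_1, w]$ with $w = [y_2, [y_3, \ldots, [y_{c-1}, y_c]\ldots]] \in \gamma_{c-1}(G)$ and $y_1$ a weight-one Mal'cev generator.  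Since $[y_1, w] \in \gamma_c(G)$, the Hall collection formula collapses in class $c$ to the clean identity $[y_1^k, w] = z^k$, because all higher correction terms involve $[[y_1, w], y_1] \in \gamma_{c+1}(G) = 1$.  Consequently $\varphi(y_1)^k = 1$ forces $\varphi(z)^k = 1$, so
\[
\mathrm{ord}(\varphi(y_1)) \geq \mathrm{ord}(\varphi(z)) \geq C_1 n^c,
\]
an inflation by a factor of $n^{c-1}$ over the naive bound $\mathrm{ord}(\varphi(y_1)) \geq n/|y_1|_X$.  Propagating this inflation through a Mal'cev factorization $|Q| = \prod m_i$, arranged so that the cyclic factor corresponding to $y_1$ inherits the full order of $\varphi(y_1)$, produces $|Q| \succeq n^{d(G) + c - 1}$, which strictly dominates $w_G(n) \approx n^{d(G)}$.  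The principal technical obstacle is verifying that the inflated order of $\varphi(y_1)$ genuinely contributes to the relative Mal'cev order of the weight-one factor rather than being absorbed into the relative orders of deeper basis elements; this is handled by ordering the Mal'cev basis so that $y_1$ sits outside the normal closure of the remaining basis elements.
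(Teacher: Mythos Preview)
Your ``if'' direction is correct and is a pleasant alternative to the paper's route. The paper simply passes to $G/T$ via Corollary~\ref{cor:tfreduction}, observes that a torsion-free nilpotent group which is virtually abelian must in fact be abelian, and invokes Corollary~\ref{prop:abelian}. Your argument via Theorem~\ref{MainTheorem}, using a terraced filtration whose terms all have full dimension, is a nice independent illustration of that theorem.

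The ``only if'' direction, however, has a genuine gap in the final step. Your derivation of $\mathrm{ord}(\varphi(y_1)) \geq C_1 n^c$ is correct. The problem is the passage from this single order bound to $|Q| \succeq n^{d(G)+c-1}$. In the Mal'cev factorization $|Q| = \prod_i m_i$, the factor $m_1$ attached to $y_1$ is the \emph{relative} order, namely the least $m$ with $y_1^m \in G_2 N$ where $G_2$ is the normal subgroup generated by the remaining basis elements and $N = \ker\varphi$. This can be much smaller than $\mathrm{ord}(\varphi(y_1))$: the excess order is then carried by $\varphi(y_1)^{m_1} \in \varphi(G_2)$, where it may overlap with the contribution you separately want from $|\varphi(G_2)| \succeq n^{d(G)-1}$. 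Your proposed fix, ordering the basis so that $y_1$ lies outside the normal closure of the rest, is automatic from the Mal'cev structure and does nothing to prevent this absorption. Nothing in the argument rules out $m_1 \asymp n$ while $\mathrm{ord}(\varphi(y_1)) \asymp n^c$.

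The paper sidesteps this entirely. Rather than bounding the Mal'cev factors of $|Q|$, it exhibits an explicit set that injects into any $Q$ detecting $B_G(Cn)$: products $x_1^{k_1}\cdots x_m^{k_m}\gamma$, where the $x_j$ are the Mal'cev generators outside $\zeta_{c-1}(G)$, the exponents range over $[1,n^2]$, and $\gamma \in B_G(n)\cap\zeta_{c-1}(G)$. Injectivity is certified by Lemma~\ref{lem:technical}, which commutes any nontrivial difference $y^{-1}x$ down to a nontrivial central element of word length at most $Cn^{2/c}\leq Cn$, hence nonvanishing in $Q$. This yields $|Q| \succeq n^m\, w_G(n)$ directly. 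To repair your approach you would need to convert the order bound on a single element into an injectivity statement for a set, which is exactly the move the paper makes.
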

\noindent
In \S\ref{sec:fullresidualfinitenessgrowth} we provide a geometric
interpretation of $\G_G^X$. From this point of view, Theorem
\ref{thm:abelian} implies that virtually abelian groups are characterized in the
class of finitely generated nilpotent groups solely in terms of the
asymptotic data of the Cayley graph.
A non-normal version of full residual finiteness growth,
  the \emph{systolic growth}, is studied in \cite{YC14}.
There it is shown that systolic growth matches word growth if and only if the group is \emph{Carnot}.

This paper is organized as follows: In \S\ref{BackgroundSection} we
present basic results on nilpotent groups and full residual finiteness
growth, including important lemmas on
word metric distortion of central subgroups of nilpotent groups
following from work of Osin \cite{MR1872804} and Pittet \cite{Pittet97}. In
\S\ref{sec:abeliangroups} we prove Theorem \ref{thm:abelian}. In
\S\ref{sec:heisenberg} we compute the full residual
finiteness growth of the Heisenberg group and prove
Theorem \ref{theorem:zl}. 
In \S\ref{MainProofSection} we give an illustrative example showing that the
conclusion of Theorem \ref{theorem:zl} does not hold in general, and
prove Theorem \ref{MainTheorem}.

We finish the introduction with a bit of history.
The concept of full residual finiteness growth was first studied by
Ben McReynolds and K.B. in \cite{BM11}. The full
  residual finiteness growth of the discrete Heisenberg group is
  presented in \cite{YC14}.
Compare full residual finiteness growth to the concept of \emph{residual finiteness growth}, which measures
how well individual elements are detected by finite quotients,
appearing in \cite{B09},
\cite{MR2583614}, \cite{BM13}, \cite{KM12}, \cite{R12}, \cite{BK12}, \cite{KMS13}.
Also compare this with Sarah Black's \emph{growth function} defined and studied in \cite{MR1659911}. 
Full residual finiteness growth measures how efficiently the word growth function 
can be recovered from Black's growth function. See remarks in \cite{MR1659911} on p.\ 406 before \S 2 for further discussion.

\paragraph*{Acknowledgements}

The authors are grateful to Benson Farb for suggesting this pursuit.
The authors acknowledge useful conversations with Moon Duchin, Michael
Larsen, Ben McReynolds, Christopher Mooney, and Denis Osin. The authors are
further grateful to Benson Farb and Ben McReynolds for comments on
drafts of this paper.
K.B. gratefully acknowledges support from the AMS-Simons Travel Grant Program.
The authors are very grateful to the excellent referee for comments
and corrections that greatly improved the paper and for suggesting
Proposition \ref{prop:Treduction}.

\section{Some background and preliminary results} \label{BackgroundSection}

\subsection{Full residual finiteness growth} \label{sec:fullresidualfinitenessgrowth}

In this subsection we give a geometric interpretation of full residual
finiteness growth for finitely presented groups.

Write $f \preceq g$ to mean there exists $C$ such that $f(n) \leq C g(Cn)$.
We write $f \approx g$ if $f \preceq g$ and $g \preceq f$.
Recall that the \emph{growth} of a function $f$ is the equivalence class of $f$ with respect to $\approx$.

We first prove a lemma that implies that the growth of the function
$\G_G^X$ defined in the introduction is independent of generating set
$X$:

\begin{lemma} \label{lem:containment}
Let $G$ be finitely generated with finitely generated subgroup $H \leq G$.
Fix finite generating sets $X$ and $Y$ for $G$ and $H$.
Then $\G^Y_H \preceq \G_G^X$.
\end{lemma}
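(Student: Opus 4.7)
The plan is to use the inclusion $H \hookrightarrow G$ to pull back any finite quotient of $G$ that fully detects a large enough ball of $G$ to a finite quotient of $H$ that fully detects the target ball in $H$. The only subtlety is converting between the $Y$-word metric on $H$ and the $X$-word metric on $G$ restricted to $H$.

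First I would set $\ell := \max_{y \in Y} |y|_X$, which is finite since $Y$ is finite. A straightforward substitution argument, writing each generator in $Y$ as a word of length at most $\ell$ in $X \cup X^{-1}$, shows that $|h|_X \leq \ell \cdot |h|_Y$ for every $h \in H$. Consequently, under the inclusion $H \hookrightarrow G$ we have $B_H^Y(n) \subseteq B_G^X(\ell n)$.

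Next, by the definition of $\G_G^X$, choose a finite group $Q$ of order $\G_G^X(\ell n)$ together with a homomorphism $\varphi \colon G \to Q$ whose restriction to $B_G^X(\ell n)$ is injective. Because $B_H^Y(n)$ sits inside $B_G^X(\ell n)$, the restriction $\varphi|_H \colon H \to Q$ is injective on $B_H^Y(n)$. Replacing $Q$ by the subgroup $\varphi(H) \leq Q$ (through which $\varphi|_H$ factors) only shrinks the target, so $\varphi(H)$ is a finite group that fully detects $B_H^Y(n)$. This yields
\[
\G_H^Y(n) \;\leq\; |\varphi(H)| \;\leq\; |Q| \;=\; \G_G^X(\ell n),
\]
and hence $\G_H^Y \preceq \G_G^X$ with constant $C = \ell$.

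I do not anticipate a serious obstacle here, as the argument is essentially an unpacking of definitions. The only point to be careful about is to corestrict $\varphi$ to its image on $H$ so that the witnessing quotient is genuinely of order at most $|Q|$; this is immediate, but without it one would only detect injectivity on $B_H^Y(n)$ inside an arbitrary overgroup rather than exhibit the small quotient of $H$ that the definition of $\G_H^Y$ requires.
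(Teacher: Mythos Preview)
Your proof is correct and follows essentially the same approach as the paper's: both use the containment $B_H^Y(n) \subseteq B_G^X(Cn)$ obtained by writing each $y \in Y$ as a bounded-length $X$-word, and then restrict a detecting homomorphism from $G$ to $H$. The corestriction step you flag as essential is in fact not needed under the paper's definition, since ``fully detected by $Q$'' only requires the existence of a homomorphism $H \to Q$ injective on the ball, not that the map be onto.
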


\begin{proof}
Since $H \leq G$, there exists $C > 0$ such that any element in $Y$ can be written in terms of at most $C$ elements in $X$.
Thus, $B_H(n) \subseteq B_G(Cn)$ for any $n > 1$.
Because any homomorphism from $G$ restricts to a homomorphism from
$H$, this gives
$$
\G_G^X(Cn) \geq \G_H^Y(n),
$$
as desired.
\end{proof}

Lemma \ref{lem:containment} in particular implies that if $X$ and $Y$
are two finite generating sets of a group $G$, then $\G_G^X \approx
\G_G^Y$. Let $\G_G$ denote the equivalence class of $\G_G^X$ with
respect to $\approx$ for any finite generating set $X$ of $G$.

We now provide a geometric interpretation of $\G_G$ in the case that
$G$ is a finitely presented group.
Let $G$ be a residually finite group with Cayley graph $\Gamma$ with
respect to a finite generating set $S$.
Each edge of $\Gamma$ is labeled by the corresponding generator.
For a subset $X \subseteq \Gamma$, we set $\partial X$ to be the collection of edges and vertices of $X$ each of which has closure not contained in the interior of $X$.
Let $\{ A_k \}$ be an increasing sequence of finite connected subsets of $\Gamma$ with 
$$
A_{k+1} = \partial A_{k+1}  \sqcup A_k.
$$
Then the sequence of subsets, $\{A_k \}$, is called a \emph{growing
  sequence}.  Let $B_{G}^S(n)$ denote the closed ball of radius $n$ in
the Cayley graph of $G$ with respect to the word metric induced by
$S$. We will omit the $S$ from the notation when the generating set is
understood and there is no chance for confusion. The prototypical
example of a growing sequence is the sequence that assigns to each
positive integer $k$ the metric ball $B_{G}^S(k)$ in the Cayley graph
of $G$ with respect to $S$.

The \emph{geometric full residual finiteness growth of $\Gamma$ with respect to
  $\{ A_k \}$} is the growth of the function, $\G_{\Gamma}^{\{ A_k \}}: \N \to \N$, given by 
\begin{eqnarray*}
n \mapsto \min \{ |Q| : \text{ $Q$ is a group with $A_n$ isometrically} \\
\text{embedding in one of its Cayley graphs}\}.
\end{eqnarray*}

Our first lemma demonstrates that the growth of $\G_G^{\{A_k \}}$ does
not depend on the growing sequence.

\begin{lemma} \label{lem:indepofgrowingset}
Let $\{ X_k \}$ and $\{ Y_k \}$ be two growing sequences for a finitely generated group $G$.
Then $\G_{G}^{\{X_{k} \}} \approx \G_{G}^{\{ Y_{k} \}}$.
\end{lemma}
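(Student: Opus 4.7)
The plan is, by symmetry, to prove $\G_{G}^{\{X_k\}} \preceq \G_{G}^{\{Y_k\}}$, and for this it suffices to produce a constant $C>0$ so that each $X_k$ is contained in a $G$-translate of $Y_{Ck}$. Given such a containment, the left action of $G$ on $\Gamma$ is by label-preserving isometries of the Cayley graph, so any isometric embedding of $Y_{Ck}$ into a Cayley graph of a finite group $Q$ pulls back through the translation to an isometric embedding of $X_k$ into the same Cayley graph, and hence $\G_{G}^{\{X_k\}}(k) \leq \G_{G}^{\{Y_k\}}(Ck)$.

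The first input is an inner ball containment: for any growing sequence $\{A_k\}$ and any vertex $v \in A_1$, one has $B_{\Gamma}(v,k-1)\subseteq A_k$ for every $k\ge 1$. I would prove this by induction on $k$. The defining equation $A_{k+1}=\partial A_{k+1}\sqcup A_k$ says exactly that $A_k$ equals the interior of $A_{k+1}$, so every vertex of $A_k$ has all of its incident $\Gamma$-edges already in $A_{k+1}$; in particular, all $\Gamma$-neighbors of such a vertex lie in $A_{k+1}$, and iterating from $v\in A_1$ gives the claim.

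The second, more delicate input is an outer linear diameter bound: for the growing sequence $\{X_k\}$ there exist constants $C_0, C_1$ with $X_k\subseteq B_{\Gamma}(v_0, C_1 k + C_0)$ for some basepoint $v_0\in X_1$. To extract this I would combine the connectedness of each $X_{k+1}$ with the defining boundary condition, tracking how the diameter of $X_{k+1}$ depends on that of $X_k$ together with the uniform thickening forced by the interior identity, to show the diameter of $X_k$ grows by at most a bounded amount at each step.

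Combining the two inputs, fix $v_0\in X_1$ and $w_0\in Y_1$ and let $g\in G$ be the group element sending $v_0$ to $w_0$. The outer bound gives $gX_k\subseteq B_{\Gamma}(w_0, C_1 k + C_0)$, and the inner bound applied to $\{Y_m\}$ gives $B_{\Gamma}(w_0, C_1 k + C_0)\subseteq Y_{C_1 k + C_0 + 1}$; taking $C=C_1+C_0+1$ produces the desired containment $gX_k\subseteq Y_{Ck}$ and completes the argument. The principal obstacle is the outer diameter estimate: the boundary condition alone does not immediately preclude $X_{k+1}$ from acquiring long ``tendrils'' attached to $X_k$, so this step requires careful use of the full definition of a growing sequence (connectedness together with the interior identity) in order to show that such pathologies cannot accumulate across levels.
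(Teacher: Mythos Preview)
Your inner ball containment is correct, but the outer linear diameter bound---which you rightly flag as the principal obstacle---is in fact \emph{false}: the tendrils you worry about are genuinely permitted by the definition, and neither connectedness nor the interior identity rules them out. In $\Gamma = \mathrm{Cay}(\Z^2,\{e_1,e_2\})$ take $X_1=\{0\}$ and let $X_2$ be the closed star of the origin together with the path $(1,0),(1,1),\dotsc,(1,N)$ in the $e_2$-direction, for any fixed $N$. Each vertex $(1,j)$ on this tendril has at most two of its four $\Gamma$-edges in $X_2$, hence lies in $\partial X_2$; one checks that the only interior vertex of $X_2$ is the origin, so $X_2=\partial X_2\sqcup X_1$ holds. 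Since $N$ is arbitrary, $\mathrm{diam}(X_2)$ is already unbounded. For $X_3$ one is forced to thicken $X_2$ so that all its vertices become interior, but one may then attach a fresh tendril of any length to a new boundary vertex, and so on inductively. Thus for a single fixed growing sequence the diameters $\mathrm{diam}(X_k)$ can be made to grow faster than any prescribed function of $k$, and no containment $X_k\subseteq B_\Gamma(v_0,C_1k+C_0)$ with constants independent of $k$ is available. Your proposed route therefore cannot be completed.

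The paper's argument does not attempt an outer diameter bound. For two growing sequences in the \emph{same} Cayley graph it uses only the finiteness of $Y_1$ together with the inner-ball property of $\{X_k\}$ to locate $K$ with $Y_1\subseteq X_K$ (and symmetrically), and from this asserts an additive comparison $\G_G^{\{Y_k\}}(n)\leq \G_G^{\{X_k\}}(K+n)$ directly; it then reduces to comparing the metric-ball sequences for two generating sets, which is the standard bi-Lipschitz comparison of word metrics. So the paper's strategy is to compare each growing sequence to the ball sequence in its own Cayley graph via an \emph{additive} shift, rather than to trap one growing sequence inside another via a diameter estimate.
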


\begin{proof}
We first assume that $\{ X_k \}$ and $\{ Y_k \}$ are growing sequences from the same Cayley graph realization of $G$.
Then there exists $K \in \N$ such that
$$
Y_1 \subseteq X_K \text{ and } X_1 \subseteq Y_K.
$$
Hence, $C_G^{\{Y_k\}} (n) \leq C_G^{\{X_{k}\}}(K+i) \text{ and } C_G^{\{X_k\}} (n) \leq C_G^{\{Y_{k}\}}(K+i).$
Thus, we can assume that $\{ X_k \}$ and $\{ Y_k \}$ are the word metric $k$-balls of $G$ with respect to two different generating sets.
It is straightforward to see that there exists $C > 0$ such that $Y_{n} \subseteq X_{Cn} \subseteq Y_{C^2n}$ for every natural number $n$.
Hence,
$$
\G_G^{\{Y_k\}} (n) \leq \G_G^{\{X_k\}} (Cn) \leq \G_G^{\{Y_k\}} (C^2 n),
$$ as desired.
\end{proof}

Next we show that the notions of full residual finiteness growth,
given in the introduction, and geometric full residual
finiteness growth, given in this section, agree in the case that the group
$G$ is finitely presented.
It would be interesting to determine if this equivalence holds for all finitely generated groups.

\begin{lemma} \label{lem:cayleyvsgirth}
Let $G$ be a finitely presented group.
For any generating set $X$ and growing sequence $\{ A_k \}$ we have
\[\G^{\{ A_k \}}_G \approx \G^{X}_G.\]
\end{lemma}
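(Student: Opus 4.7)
The plan is to prove the two inequalities $\G_G^{\{A_k\}} \preceq \G_G^X$ and $\G_G^X \preceq \G_G^{\{A_k\}}$ separately. By Lemma~\ref{lem:indepofgrowingset} I may first replace the arbitrary growing sequence $\{A_k\}$ by the sequence of word-metric balls $A_k := B_G^X(k)$, since both have the same growth class, reducing the problem to comparing $\G_G^X$ to the minimal order of a group in whose Cayley graph $B_G^X(n)$ embeds isometrically (as a labeled graph).

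For the direction $\G_G^{\{A_k\}} \preceq \G_G^X$, I take a finite quotient $\varphi\colon G\to Q$ realizing $\G_G^X(2n)$, so $\varphi$ is injective on $B_G^X(2n)$. Restricted to $B_G^X(n)$, the map $\varphi$ is a label-preserving isometric embedding into $\mathrm{Cay}(Q,\varphi(X))$: distances weakly decrease under $\varphi$ by the triangle inequality, and any strict drop between $\varphi(g)$ and $\varphi(h)$ with $g,h\in B_G^X(n)$ would produce a shorter word in $X$ representing the same element of $Q$ as $g^{-1}h$ but not equal to $g^{-1}h$ in $G$, contradicting injectivity on $B_G^X(2n)$. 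This yields $\G_G^{\{A_k\}}(n)\leq \G_G^X(2n)$.

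For the reverse direction I exploit finite presentability. Fix a finite presentation $G = \langle X \mid R\rangle$ and set $L:=\max_{r\in R}|r|_X$. For $n\geq L$, given a label-preserving isometric embedding $\iota\colon B_G^X(n)\hookrightarrow \mathrm{Cay}(Q,Y)$ realizing $\G_G^{\{A_k\}}(n)=|Q|$, after left-translating in $Q$ I may assume $\iota(1)=1$. The induced map on labels $\psi\colon X\to Y$ sends each relator $r\in R$ to a loop in $\mathrm{Cay}(Q,Y)$, because the path traced by $r$ in $G$ lies entirely inside $B_G^X(L)\subseteq B_G^X(n)$ and closes up there. Hence $\psi$ extends to a homomorphism $\bar\psi\colon G\to Q$, and tracing geodesic words shows $\bar\psi$ agrees with the injective map $\iota$ on $B_G^X(\lfloor n/2\rfloor)$. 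Thus $\bar\psi$ is injective on $B_G^X(\lfloor n/2\rfloor)$, and its image has order at most $|Q|$, giving $\G_G^X(\lfloor n/2\rfloor)\leq |Q|$.

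The main obstacle is the second direction: one must manufacture a genuine group homomorphism from a purely geometric embedding. Finite presentability is essential here, since without a uniform bound $L$ on relator length, a single ball $B_G^X(n)$ need not witness enough relations to determine a homomorphism. The argument also implicitly adopts the natural convention that embeddings between Cayley graphs respect the canonical edge labels; this is harmless because for the easy direction the embedding produced by $\varphi$ is automatically label-preserving, while the hard direction explicitly requires the label data to transfer relations from $G$ to $Q$.
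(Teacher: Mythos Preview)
Your argument is correct and follows essentially the same route as the paper: reduce to the ball sequence via Lemma~\ref{lem:indepofgrowingset}, then use a finite presentation to promote a labeled isometric embedding of $B_G^X(n)$ into some Cayley graph of $Q$ to an honest homomorphism $G\to Q$ that is injective on a large ball. Your treatment is in fact more careful than the paper's: you actually justify the ``clear'' direction (and your factor of~$2$ there is genuinely needed to get an isometry on $B_G^X(n)$, since comparing $d_Q(\varphi(g),\varphi(h))$ to $d_G(g,h)$ for $g,h\in B_G^X(n)$ requires control over $g^{-1}h\in B_G^X(2n)$), and you make explicit the label-preservation convention that the paper uses silently.

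One minor sharpening: in the hard direction your factor $\lfloor n/2\rfloor$ is not needed. If $g\in B_G^X(n)$, choose a geodesic word $g=x_1\cdots x_k$ with $k\le n$; every prefix lies in $B_G^X(n)$, so the entire path is carried by $\iota$ to a labeled path in $\mathrm{Cay}(Q,Y)$ starting at $1$, whence $\bar\psi(g)=\iota(g)$. Thus $\bar\psi$ agrees with $\iota$ on all of $B_G^X(n)$ and is injective there, matching the paper's (terser) claim. This does not affect the growth conclusion, of course.
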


\begin{proof}
Let $X$ be a finite generating set for $G$ and let $R$ be the set of finite relations.
It is clear that $\G^{\{ A_k \}}_G \preceq \G^{X}_G$. We show the reverse inequality.
We can, by Lemma \ref{lem:indepofgrowingset}, suppose that the growing set $\{ A_k \}$ is simply the sequence $ \{ B_G^X(k) \}$.
It suffices, then, to show that there exists $N \in \N$ such that for any $n > N$ and any finite group, $Q$, with $B_G(n)$ isometrically embedding in a Cayley graph realization of $Q$, there exists a homomorphism
$\phi: G \to Q$ with $\phi |_{B_G(n)}$ being injective.
Select $N$ to be the maximal word length of any element in $R$.
Then since $B_n$ isometrically embeds in a Cayley graph of $Q$, we see that there exists a generating set for $Q$ such that each relator $R$ is satisfied by this generating set.
This finishes the proof.
\end{proof}

The next lemma controls some of the full residual finiteness growth of a direct product of groups.

\begin{lemma} \label{lem:directproducts}
Let $G$ and $H$ be finitely generated groups.
Then 
$$
\G_{G \times H} \preceq \G_G \cdot \G_H.
$$
\end{lemma}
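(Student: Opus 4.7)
The plan is to take an optimal fully-detecting quotient for each factor and form the direct product of the two quotient maps.

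First, I would fix finite generating sets $X$ for $G$ and $Y$ for $H$, and use the generating set $X \cup Y$ (identifying $X$ with $X \times \{1\}$ and $Y$ with $\{1\} \times Y$) for $G \times H$. The key geometric observation is the inclusion
\[
B_{G \times H}^{X \cup Y}(n) \subseteq B_G^X(n) \times B_H^Y(n),
\]
which follows because any word of length at most $n$ in $X \cup Y$ can be rewritten, using the fact that elements of $X$ commute with elements of $Y$ in $G \times H$, as a pair $(g,h)$ where $g$ is expressed by at most $n$ letters from $X$ and $h$ by at most $n$ letters from $Y$.

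Next, for a given $n$, I would choose homomorphisms $\varphi_G : G \to Q_G$ and $\varphi_H : H \to Q_H$ that fully detect $B_G^X(n)$ and $B_H^Y(n)$ respectively, with $|Q_G| = \G_G^X(n)$ and $|Q_H| = \G_H^Y(n)$. The product map $\varphi_G \times \varphi_H : G \times H \to Q_G \times Q_H$ is a homomorphism into a group of order $\G_G^X(n) \cdot \G_H^Y(n)$. If $(g_1, h_1)$ and $(g_2, h_2)$ lie in $B_{G \times H}^{X \cup Y}(n)$ and have equal image, then the inclusion above places $g_i \in B_G^X(n)$ and $h_i \in B_H^Y(n)$, so the full detection hypothesis forces $g_1 = g_2$ and $h_1 = h_2$. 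Hence $\varphi_G \times \varphi_H$ fully detects $B_{G \times H}^{X \cup Y}(n)$, yielding
\[
\G_{G \times H}^{X \cup Y}(n) \leq \G_G^X(n) \cdot \G_H^Y(n),
\]
which gives the desired inequality $\G_{G \times H} \preceq \G_G \cdot \G_H$ after passing to equivalence classes via Lemma \ref{lem:containment}.

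There is no real obstacle here: the only mild subtlety is verifying the ball containment, which relies on the commutation between the two factors, and checking that injectivity on the product ball reduces coordinate-wise to the two given full detections. No distortion estimates are needed because the word metric on $G \times H$ with respect to $X \cup Y$ restricts to the word metrics on the factors.
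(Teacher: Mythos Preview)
Your proof is correct and essentially identical to the paper's: both fix the product generating set $(X\times\{1\})\cup(\{1\}\times Y)$, observe the ball containment $B_{G\times H}(n)\subseteq B_G(n)\times B_H(n)$, and detect the product ball via the product $Q_G\times Q_H$ of optimal detecting quotients. Your write-up spells out the injectivity check in slightly more detail, but there is no substantive difference in approach.
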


\begin{proof}
  Fix generating sets $X$ and $Y$ for $G$ and $H$. Then $(X \times \{
  1 \}) \cup ( \{1\} \times Y )$ is a finite generating set for $G \times H$.
  Note that
  $$
  B_{G\times H} (n)  \subseteq (B_G (n) \times \{1\}) (\{1\} \times B_H(n)).
  $$
  Thus, if $Q_1$ is a quotient that fully detects $B_G(n)$ and $Q_2$ a quotient that fully detects $B_H(n)$, then $Q_1 \times Q_2$ fully detects $B_{G \times H}(n)$.
  We see then that $\G_{G \times H} \preceq \G_G \G_H$, as desired.
\end{proof}
\noindent
Can the conclusion of Lemma \ref{lem:directproducts} be improved to $\G_{G\times H} \approx \G_G \G_H$?
This can possibly be false: it is not even true that if $\varphi : G \to H$ is a surjective homomorphism, then $\G_G(n) \succeq \G_H(n)$. Consider a free group mapping onto one of Kharlampovich-Sapir's solvable and finitely presented groups of arbitrarily large residual finiteness growth \cite{KMS13}.

Full residual finiteness growth is well-behaved under taking the
quotient by a finite normal subgroup:

\begin{proposition} \label{prop:Treduction}
	Let $G$ be a finitely generated residually finite group.
	Let $T$ be a finite normal subgroup of $G$.
	Then $\Phi_G \approx \Phi_{G/T}$.
\end{proposition}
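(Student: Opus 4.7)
The plan is to establish both inequalities $\Phi_{G/T}\preceq \Phi_G$ and $\Phi_G\preceq \Phi_{G/T}$ directly. I would fix a finite generating set $X$ of $G$ and use $\pi(X)$ for $G/T$, where $\pi\colon G\twoheadrightarrow G/T$ is the quotient map; with this choice $B_{G/T}(n)=\pi(B_G(n))$ and every element of $B_{G/T}(n)$ lifts to some element of $B_G(n)$. By Lemma~\ref{lem:containment} the growth $\Phi_{G/T}$ is independent of the generating set, so this is no loss. A brief preliminary verifies that $G/T$ is residually finite, via a standard argument using residual finiteness of $G$ and finiteness of $T$.

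For $\Phi_{G/T}\preceq \Phi_G$, set $C:=\max_{t\in T}\lvert t\rvert_X<\infty$. Given a finite quotient $\psi\colon G\twoheadrightarrow Q$ that fully detects $B_G(n+C)$, normality of $T$ makes $\psi(T)$ normal in $Q$, so $\psi$ descends to $\bar\psi\colon G/T\to Q/\psi(T)$. If two elements of $B_{G/T}(n)$ have equal $\bar\psi$-image, lifting them to $g_1,g_2\in B_G(n)$ yields $\psi(g_1)=\psi(g_2 t)$ for some $t\in T$; since $g_1$ and $g_2t$ both lie in $B_G(n+C)$, injectivity of $\psi$ on that ball forces $g_1=g_2t$, hence the two original elements of $G/T$ coincide. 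This gives $\Phi_{G/T}(n)\leq \Phi_G(n+C)$.

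For $\Phi_G\preceq\Phi_{G/T}$, the idea is to compensate for the loss of injectivity on $T$ using an auxiliary normal subgroup. Using residual finiteness of $G$, I would fix once and for all a finite-index normal subgroup $N\lhd G$ with $N\cap T=\{1\}$, obtained as the intersection of finitely many finite-index normal subgroups each missing one nonidentity element of $T$. Now given $\varphi\colon G/T\twoheadrightarrow Q$ fully detecting $B_{G/T}(n)$, set $K:=\ker(\varphi\circ\pi)$ and consider the quotient $G\to G/(K\cap N)$. Its order is at most $|Q|\cdot[G:N]$, and if $g_1,g_2\in B_G(n)$ agree modulo $K\cap N$, then membership in $K$ forces $\pi(g_1)=\pi(g_2)$ by injectivity of $\varphi$ on $B_{G/T}(n)$, whence $g_1g_2^{-1}\in T\cap N=\{1\}$.

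I do not anticipate a genuine obstacle. The two mildly delicate points are the additive radius buffer $C$ in the first inequality, which is harmless because absorbed into $\approx$, and the construction of the auxiliary subgroup $N$ in the second, which is the sole place residual finiteness of $G$ enters overtly. Everything else is routine bookkeeping around ball radii and indices.
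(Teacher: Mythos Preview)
Your proposal is correct and follows essentially the same approach as the paper: the first inequality uses the same additive buffer $C=\max_{t\in T}\lvert t\rvert_X$ and the induced map $G/T\to Q/\psi(T)$, while the second inequality uses the same auxiliary finite-index normal subgroup $N$ with $N\cap T=\{1\}$ and the quotient by $\ker(\varphi\circ\pi)\cap N$. The only differences are cosmetic (you phrase detection as injectivity on balls throughout, and you explicitly note the preliminary that $G/T$ is residually finite).
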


\begin{proof}
	Fix a generating set $X$ for $G$, and let $Y$ be the image of $X$ under the quotient map $G \to G/T$. Let $K$ be the largest length, with respect to $X$, of an element in $T$.
	We first claim $\Phi_G^X(K+n) \geq \Phi_{G/T}^Y(n)$.
	Let $\phi: G \to Q$ be a finite quotient of minimal cardinality that fully detects $B_G^X(K+n)$. That is $|Q| = \Phi_G^X(K+n)$.
	Define $\psi : G/T \to Q/\phi(T)$ by $gT \mapsto \phi(g) \phi(T)$.
	Let $g \in B_{G/T}^Y(n) \cap \ker \psi$. 
	By construction, we may lift $g$ to an element $\tilde g \in G$ such that $\tilde g \in B_{G}^X(n)$ and $\phi(\tilde g) \in \phi(T)$.
	That is, there exists $t \in T$, such that $\phi(g) = \phi(t)$, which gives
	$$
		\phi(\tilde g t^{-1}) = 1.
	$$
	If $\tilde g t^{-1} \neq 1$, then this contradicts that $\phi$ fully detects $B_G(K+n)$. Hence, $\tilde g = t$, and so $\ker \psi \cap B_{G/T}^Y(K+n)$ is trivial. It follows that $\psi$ fully detects $B_{G/T}^Y(n)$, and so $\Phi_G^X(K+n) \geq \Phi_{G/T}^Y(n)$, as claimed.

	Since $G$ is residually finite and $T$ is finite, there exists a normal subgroup, $H$, such that $T\cap H = 1$.
	To finish, we claim that $\Phi_{G}^X(n) \leq [G:H]\Phi_{G/T}^Y(n)$.
	Let $\psi : G/T \to Q$ be a quotient that fully detects $B_{G/T}^Y(n)$, with $|Q| = \Phi_{G/T}^Y(n)$.
	Let $\phi : G \to Q$ be the natural map $G \to G/T \to Q$.
	Set $N = \ker \phi \cap H$.
	Clearly, $[G: N] \leq [G: \ker \phi] [G:H] = |Q| [G:H]$.
	Moreover, if $g \in B_G^X(n) \cap N$, then $g \notin T$.
	Hence, by the construction of $Y$, we have that $\phi(g) \neq 1$.
	It follows that $G/N$ fully detects $B_G^X(n)$, and so
	$\Phi_{G}^X(n) \leq [G:H]\Phi_{G/T}^Y(n)$, as desired.
\end{proof}

 We finish the section with a lemma that, in some restrictive cases, allows us to pass to finite-index subgroups.

\begin{lemma} \label{lem:finiteindex}
Let $G$ and $H$ be finitely generated nilpotent groups with $H$ normal
subgroup in $G$ of finite index.
If every normal subgroup of $H$ is normal in $G$, then $\G_G \approx \G_H$.
\end{lemma}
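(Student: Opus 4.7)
The direction $\Phi_H \preceq \Phi_G$ is immediate from Lemma \ref{lem:containment} applied to $H \leq G$, so the content lies in showing $\Phi_G \preceq \Phi_H$. The plan is to take a finite quotient of $H$ that fully detects a large ball in $H$ and promote it to a finite quotient of $G$ of comparable order detecting the corresponding ball in $G$. The hypothesis that every normal subgroup of $H$ is normal in $G$ is exactly what makes this promotion automatic.

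Concretely, fix finite generating sets $X$ of $G$ and $Y$ of $H$. Since $H$ has finite index in $G$, the inclusion is a quasi-isometry with respect to the word metrics, so there is a constant $C$ with
$$
B_G^X(m) \cap H \subseteq B_H^Y(Cm) \quad \text{for every } m.
$$
Given $n$, choose $\varphi : H \to Q$ a finite quotient of minimal order fully detecting $B_H^Y(2Cn)$, so $|Q| = \Phi_H^Y(2Cn)$, and set $N = \ker \varphi$. The hypothesis guarantees $N \trianglelefteq G$, so we can form the quotient map $\pi : G \to G/N$. Because $H/N \cong Q$ has index $[G:H]$ in $G/N$, we get
$$
|G/N| = [G:H] \cdot \Phi_H^Y(2Cn).
$$

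It remains to verify that $\pi$ fully detects $B_G^X(n)$. If $g_1,g_2 \in B_G^X(n)$ satisfy $\pi(g_1)=\pi(g_2)$, then $g_1 g_2^{-1} \in N \subseteq H$, so $g_1 g_2^{-1} \in B_G^X(2n) \cap H \subseteq B_H^Y(2Cn)$. Since $\varphi(g_1 g_2^{-1}) = 1 = \varphi(1)$ and $\varphi$ is injective on $B_H^Y(2Cn)$, we conclude $g_1 = g_2$. This yields $\Phi_G^X(n) \leq [G:H] \cdot \Phi_H^Y(2Cn)$, hence $\Phi_G \preceq \Phi_H$, completing the proof.

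There is no essential obstacle; the only care required is in choosing the radius $2Cn$ large enough so that the quasi-isometry between $H$ and $G$ places all difference elements from $B_G^X(n)$ inside the ball where $\varphi$ is injective. Notably, nilpotency is not used directly in the argument (only finite index and the normality hypothesis), though it is the ambient setting of the paper.
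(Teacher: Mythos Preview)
Your proof is correct and follows essentially the same approach as the paper's: both use Lemma~\ref{lem:containment} for one direction, then for the other use the quasi-isometry $H\hookrightarrow G$ to obtain $B_G(2n)\cap H\subseteq B_H(Cn)$, take a minimal quotient $H/N$ detecting the appropriate $H$-ball, invoke the hypothesis to make $N\trianglelefteq G$, and observe that $G/N$ (of order $[G:H]\cdot|H/N|$) fully detects $B_G(n)$. Your observation that nilpotency plays no direct role is also accurate.
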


\begin{proof}
By Lemma \ref{lem:containment}, it suffices to show that $\G_G \preceq \G_H$.
Fix generating sets for $G$ and $H$ so that $B_H(n) \subseteq B_G(n)$
for all $n>0$.
Because $H$ is of finite index in $G$ and thus quasi-isometric to $G$, there exists $C > 0$ such that $H \cap B_G(2n) \subseteq B_H(Cn)$.
Let $H/K$ be a quotient of $H$ that fully detects $B_H(Cn)$.
By our assumption, $K$ is normal in $G$ so $G/K$ is well-defined.
Then any element in $B_G(2n)$ not in $H$ is mapped nontrivially onto $G/K$.
And since $H \cap B_G(2n) \subseteq B_H(Cn)$, it follows that $B_G(2n)$ is mapped nontrivially onto $G/K$.
Thus, $B_G(n)$ is fully detected by $G/K$, and so we are done.
\end{proof}

\subsection{Nilpotent groups} \label{sec:nilpotent}

In this subsection we fix basic notation and present several lemmas that play important roles in our proofs.
Let $G$ be a group. The \emph{lower central series} $\gamma_{k} (G) $ of $G $ is the sequence of subgroups defined by $\gamma_{1}(G) = G$ and 
$$\gamma_{k}(G) =[\gamma_{k-1}(G),G].$$ 
For any group $H$, let $Z(H)$ denote the center of $H$.
The \emph{upper central series} $\zeta_k (G) $ of $G $ is
given by $\zeta_{0}(G)=\{e\}$ and the formula 
$$
\zeta_{k}(G) / \zeta_{k - 1} (G) =
Z(G/\zeta_{k-1}(G)).
$$ 
The group $G$ is said to be \emph{nilpotent} if $\gamma_{k}(G) = 1$
for some natural number $k$.  Equivalently, $G$ is nilpotent if and
only if it is an element of its upper central series. Moreover, $G$ is
said to be \emph{nilpotent of class $c$} if $\gamma_c(G) \neq 1$ and
$\gamma_{c+1}(G) = 1$.

If $G$ is a finitely generated nilpotent group, then the successive quotients of the upper central series of $G$ are abelian groups of finite-rank.
Thus, the upper central series has a refinement
$$G =  G_{1} \ge G_{2} \ge \ldots G_{n+1} = 1,$$
such that $G_{i} / G_{i+1} $ is cyclic for all $i = 1, \ldots, n$.  
The number of infinite cyclic factors in this series does not depend on the series and is called the \emph{dimension} of $G$, denoted by $\dim(G)$ \cite[p. 16, Exercise 8]{MR713786}.
Let this series be chosen so that $n$ is minimal.
An $n$-tuple of elements $(g_{1}, g_{2}, \ldots , g_{n})\in G^ n$
is a \emph{basis} for $G $ if $g_{i}\in G_{i}$ and
$G_i/G_{i-1}=\left <g_{i} G_{i - 1} \right >$ for
each $i = 1,\ldots, n $.
In the case when $G_{i} / G_{i+1} $ is infinite for all $i = 1, \ldots, n$ we call the $n$-tuple a \emph{Malcev basis} for $G$.

The set of torsion elements $T$ in a finitely generated nilpotent
group $G$ is a finite normal subgroup, and the quotient $G/T$ is a
torsion-free nilpotent group \cite[p. 13, Corollary 10]{MR713786}. A
corollary of Proposition \ref{prop:Treduction} is that $G/T$ has the
same full residual finiteness growth as $G$.

\begin{corollary}\label{cor:tfreduction}
  If $G$ is a finitely generated nilpotent group and $T$ is the
  subgroup of torsion elements then $\Phi_G \approx \Phi_{G/T}$.
\end{corollary}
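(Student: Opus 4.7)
The plan is to observe that this corollary is essentially an immediate application of Proposition \ref{prop:Treduction}, so the entire proof amounts to verifying the two hypotheses of that proposition: that $G$ is residually finite, and that $T$ is a finite normal subgroup.

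First I would invoke the fact, cited just above the corollary statement (\cite[p. 13, Corollary 10]{MR713786}), that the torsion elements $T$ of a finitely generated nilpotent group $G$ form a finite normal subgroup. This is exactly the hypothesis on $T$ required by Proposition \ref{prop:Treduction}. Second, I would note that finitely generated nilpotent groups are residually finite; this is classical and follows, for example, from the fact that they are polycyclic and hence Hirsch, Malcev, and others have shown they are residually finite. (Alternatively, one can argue by induction on nilpotency class using that finitely generated abelian groups are residually finite.)

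With both hypotheses verified, Proposition \ref{prop:Treduction} applies directly to give $\Phi_G \approx \Phi_{G/T}$, which is the desired conclusion.

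There is no real obstacle here; the only thing to be careful about is simply citing the two standard facts (finiteness of the torsion subgroup of a finitely generated nilpotent group, and residual finiteness of finitely generated nilpotent groups) so that the application of Proposition \ref{prop:Treduction} is justified.
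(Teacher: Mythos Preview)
Your proposal is correct and matches the paper's approach exactly: the paper states this corollary immediately after noting that the torsion elements form a finite normal subgroup (citing \cite[p.~13, Corollary 10]{MR713786}) and treats it as a direct consequence of Proposition~\ref{prop:Treduction}, just as you do. Your explicit mention of residual finiteness of finitely generated nilpotent groups is a small bit of extra care that the paper leaves implicit.
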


We recall a folklore result, used in the proof of the following
lemmas.

\begin{lemma} \label{lem:commutatorproduct}
  Suppose $G$ is a finitely generated nilpotent group of class $c$. The assignment $(x, y)
  \mapsto [x,y]$ defines a homomorphism
\[
\left( \zeta_k(G)/\zeta_{k-1}(G) \right) \times \left( \zeta_\ell(G) /
  \zeta_{\ell-1}(G) \right) \to \zeta_{k+\ell-c-1}(G) / \zeta_{k+\ell-c-2}(G).
\]
\end{lemma}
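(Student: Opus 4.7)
The plan is to prove the lemma in two steps. First, I would establish the subgroup containment $[\zeta_k(G), \zeta_\ell(G)] \subseteq \zeta_{k+\ell-c-1}(G)$ (using the convention $\zeta_m(G) = 1$ whenever $m \leq 0$), and then deduce that the commutator map descends to a well-defined bilinear homomorphism on the indicated quotients using standard commutator identities.

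For the containment, I would induct on $s = k + \ell$. The cases with $k \leq 0$ or $\ell \leq 0$ are trivial. For the inductive step with $k, \ell \geq 1$, I would apply the Hall-Witt three subgroups lemma to the normal subgroups $\zeta_k(G)$, $\zeta_\ell(G)$, and $G$:
\[
[[\zeta_k(G), \zeta_\ell(G)], G] \subseteq [[\zeta_\ell(G), G], \zeta_k(G)] \cdot [[G, \zeta_k(G)], \zeta_\ell(G)].
\]
The defining property $[\zeta_m(G), G] \subseteq \zeta_{m-1}(G)$ of the upper central series rewrites the right-hand side as $[\zeta_{\ell-1}(G), \zeta_k(G)] \cdot [\zeta_{k-1}(G), \zeta_\ell(G)]$, and both factors lie in $\zeta_{k+\ell-c-2}(G)$ by the inductive hypothesis, since the total index has dropped by one. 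Therefore $[[\zeta_k(G), \zeta_\ell(G)], G] \subseteq \zeta_{k+\ell-c-2}(G)$, and by definition of the upper central series this is exactly the statement that $[\zeta_k(G), \zeta_\ell(G)] \subseteq \zeta_{k+\ell-c-1}(G)$.

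With the containment in hand, I would use the commutator identities $[aa', b] = [a, b]^{a'}[a', b]$ and $[a, bb'] = [a, b'][a, b]^{b'}$. The inclusion $[\zeta_{k+\ell-c-1}(G), G] \subseteq \zeta_{k+\ell-c-2}(G)$ shows that the conjugates $[a, b]^{a'}$ and $[a, b]^{b'}$ agree with $[a, b]$ modulo $\zeta_{k+\ell-c-2}(G)$, yielding bilinearity of the induced map. For well-definedness on the cosets $a\zeta_{k-1}(G)$ and $b\zeta_{\ell-1}(G)$, a similar expansion with $z \in \zeta_{k-1}(G)$ produces error terms contained in $[\zeta_{k-1}(G), \zeta_\ell(G)] \subseteq \zeta_{k+\ell-c-2}(G)$ and in $[\zeta_{k+\ell-c-1}(G), G] \subseteq \zeta_{k+\ell-c-2}(G)$, both via the containment.

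The main obstacle is the containment itself; once it is established, bilinearity and well-definedness reduce to routine manipulations in the commutator calculus. Care must be taken with the base of the induction where one of the indices is small, but these boundary cases are handled uniformly by the convention that negative-indexed terms of the upper central series are trivial.
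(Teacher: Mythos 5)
Your plan reduces the lemma to the containment $[\zeta_k(G),\zeta_\ell(G)]\subseteq\zeta_{k+\ell-c-1}(G)$, but that containment is false in general, so the induction cannot be repaired. The precise failure point is the last line of your inductive step: from $[[\zeta_k,\zeta_\ell],G]\subseteq\zeta_{m-1}$ with $m=k+\ell-c-1$ you conclude $[\zeta_k,\zeta_\ell]\subseteq\zeta_m$ ``by definition of the upper central series,'' but this implication is valid only for $m\geq 1$; when $m\leq 0$ your convention makes $\zeta_{m-1}=\zeta_m=1$, and $[N,G]=1$ yields only $N\subseteq Z(G)$, not $N=1$. Your induction on $k+\ell$ necessarily passes through this degenerate range (to prove the first nondegenerate case $k+\ell=c+2$ you invoke the hypothesis at $k+\ell=c+1$), and the claim is genuinely false there: for $G=H_3\times U_4$, which has class $c=3$, one has $\zeta_2(G)=\zeta_2(H_3)\times\zeta_2(U_4)\supseteq H_3\times\{1\}$, so $[\zeta_2(G),\zeta_2(G)]\neq 1$, whereas your claim at $k=\ell=2$ asserts it is trivial. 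The failure then propagates into the range where all indices are positive: for $G=U_4\times U_5$, of class $c=4$, one has $\zeta_3(G)=U_4\times\zeta_3(U_5)$, and the commutator of $e_{1,2}$ and $e_{2,3}$ (both in $\zeta_3(G)$) is $e_{1,3}^{\pm 1}$, which does not lie in $\zeta_1(G)=\langle e_{1,4}\rangle\times\langle e_{1,5}\rangle$; here $k=\ell=3$ and $k+\ell-c-1=1$, so the containment (and with it the asserted map into $\zeta_1/\zeta_0$) fails. So Step 1 is not merely unproved but false, and the bilinearity step collapses with it.

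For comparison, the paper's own proof is a one-line citation: reindex $F_i=\zeta_{c+1-i}(G)$ and apply a quoted theorem about central filtrations. For the conclusion at arbitrary $(k,\ell)$ such a theorem needs exactly the strong condition $[F_i,F_j]\subseteq F_{i+j}$, which is the containment you attempted, and the example above shows the reindexed upper central series need not satisfy it; so the difficulty you ran into is real, and the statement should be read restrictively, in the instances in which it is actually invoked later in the paper. Those instances only require multiplicativity modulo the next term down, with one argument ranging over $G$: for $x,x'\in\zeta_k(G)$ and $g,g'\in G$ one has $[xx',g]\equiv[x,g][x',g]$ and $[x,gg']\equiv[x,g][x,g']$ modulo $\zeta_{k-2}(G)$, which follows from the definitional $[\zeta_k(G),G]\subseteq\zeta_{k-1}(G)$ together with $[\zeta_{k-1}(G),\zeta_k(G)]\subseteq[\zeta_{k-1}(G),G]\subseteq\zeta_{k-2}(G)$; no three subgroups lemma is needed. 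If you want a correct general pairing of this shape, pair the upper central series against the lower one: $[\zeta_k(G),\gamma_j(G)]\subseteq\zeta_{k-j}(G)$ does hold (your three-subgroups induction works there, descending in $j$ rather than through the diagonal), and it induces a well-defined bilinear map $\zeta_k/\zeta_{k-1}\times\gamma_j/\gamma_{j+1}\to\zeta_{k-j}/\zeta_{k-j-1}$, which covers every use of Lemma~\ref{lem:commutatorproduct} in the paper.
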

\begin{proof}
  This follows immediately from \cite[Theorem 2.1]{MR2366181}, noting
  that the upper central series is a central filtration of $G$ when
  indexed so that the $i^{th}$ term of the filtration is
  $\zeta_{c+1-i}(G)$.
\end{proof}

If $G$ is a group generated by a finite set $X$, for $g\in G$ we use
$\| g \|_X$ to denote the word length of $g$ with respect to $X$. 
Let $G$ be a finitely generated nilpotent group.
The following lemma is a consequence of well-known distortion estimates.

\begin{lemma} \label{lem:distortion}
Let $G$ be a nilpotent group of class $c$ generated by a finite set $X$.
Fix a positive integer $i$ and a generating set $X_i$ for $Z(G) \cap \gamma_i(G)$.
Then there exists $C > 1$ such that for all $g \in Z(G) \cap \gamma_i(G),$
\begin{equation} \label{eq:distortion}
\|g \|_{X} \leq C \| g \|_{X_i}^{1/i}. 
\end{equation}
\end{lemma}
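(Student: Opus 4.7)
The plan is to combine Osin's cyclic subgroup distortion estimates \cite{MR1872804} (with a parallel formulation available in \cite{Pittet97}) with the abelian structure of $H := Z(G)\cap \gamma_i(G)$. Because $H\subseteq Z(G)$, it is finitely generated abelian and splits as $\Z^r\oplus F$ for some finite torsion subgroup $F$. I will fix a free basis $h_1,\dots,h_r$ of the torsion-free part and coset representatives for $F$, and work with the coordinate expression $h = h_1^{a_1}\cdots h_r^{a_r}\cdot f$ for $h\in H$.

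An elementary fact about finitely generated abelian groups says that any two finite generating sets yield bi-Lipschitz equivalent word metrics, up to a uniformly bounded contribution from the finite torsion subgroup. Applied to $X_i$ and $\{h_1,\dots,h_r\}\cup F$, this yields a constant $c$ with $\max_j |a_j| \leq c(\|h\|_{X_i}+1)$ for every $h\in H$. This step is routine and does not use the ambient group.

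Next, each basis element $h_j$ lies in $\gamma_i(G)$, so Osin's distortion theorem for a cyclic subgroup of a finitely generated nilpotent group supplies constants $C_j$ with $\|h_j^n\|_X \leq C_j(|n|+1)^{1/i}$ for every integer $n$. Since $h_j\in Z(G)$, the expression $h_1^{a_1}\cdots h_r^{a_r}\cdot f$ is a genuine unambiguous product in $G$ (the order of the factors is irrelevant), so the triangle inequality for $\|\cdot\|_X$ gives
\[
\|h\|_X \;\leq\; \sum_{j=1}^{r}\|h_j^{a_j}\|_X \;+\; \|f\|_X \;\leq\; C'\max_j(|a_j|+1)^{1/i} \;+\; C'',
\]
where $C'$ absorbs all the $C_j$ together with the factor $r$, and $C''$ bounds $\|f\|_X$ uniformly over $f\in F$. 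Combining with the abelian bound from the previous paragraph, absorbing constants, and enlarging $C$ to cover the finitely many elements of small $X_i$-norm, I obtain a single constant $C>1$ with $\|h\|_X\leq C\,\|h\|_{X_i}^{1/i}$ for all $h\in H$.

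The only nontrivial ingredient is the estimate $\|h_j^n\|_X\leq C_j(|n|+1)^{1/i}$, i.e.\ the statement that a cyclic subgroup of $G$ generated by an element of $\gamma_i(G)$ has distortion at most $n^i$. This is exactly the Osin/Pittet machinery the lemma is advertised to rely on, so I expect the main obstacle to be merely a careful invocation of the correct reference; the rest of the argument is routine abelian-groups bookkeeping and the triangle inequality, enabled by the centrality of $H$.
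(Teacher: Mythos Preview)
Your proposal is correct and follows essentially the same skeleton as the paper's proof: reduce to powers of single generators of $Z(G)\cap\gamma_i(G)$, invoke the Osin/Pittet cyclic distortion estimate $\|h_j^{n}\|_X \leq C_j\,|n|^{1/i}$, and then combine via the triangle inequality, using that all factors are central. The bookkeeping with the abelian word metric and the absorption of additive constants is exactly as in the paper.

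Where you genuinely diverge is in the treatment of torsion. The paper first proves the lemma for torsion-free $G$ by embedding $G$ as a lattice in a simply-connected nilpotent Lie group and applying Pittet's length estimates directly; it then reduces the general case to the torsion-free case by passing to a torsion-free finite-index normal subgroup $H\leq G$, proving $Z(H)\leq Z(G)$, and running a bi-Lipschitz comparison between $(Z(H)\cap\gamma_i(H))$ and $(Z(G)\cap\gamma_i(G))$. Your route is more economical: you stay in $G$, split the abelian group $Z(G)\cap\gamma_i(G)\cong \Z^r\oplus F$, and absorb the finite part $F$ as a bounded additive error. This avoids the detour through $Z(H)\leq Z(G)$ entirely. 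The only caveat is that the Pittet formulation of the cyclic estimate goes through the Malcev completion and hence presumes torsion-freeness; if you cite that version, you should note that the map $G\to G/T$ (with $T$ the finite torsion subgroup) is bi-Lipschitz and carries each $h_j$ to an infinite-order element of $\gamma_i(G/T)$, so the estimate transfers back. With that one-line remark your argument is complete and somewhat shorter than the paper's.
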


\begin{proof}
We first assume $G$ is torsion-free. 
First consider the case that $g = x^m$ for some $x\in X_i$ and $m\in
\mathbb{Z}\setminus\{0\}$. 
Assume without loss of generality that $X_i$ is a basis for the free
abelian group $Z(G) \cap \gamma_i(G)$, so that $\| g \|_{X_i} = \left\lvert
  m \right\rvert$.
Embed $G$ as a cocompact lattice in a simply-connected nilpotent Lie group $N$, which
identifies $Z(G) \cap \gamma_i(G)$ with a lattice in a
simply-connected central subgroup $Z\leq N$. Fix any left-invariant
Riemannian metric on $N$, which gives a norm $\| \cdot \|_\mf{n}$ on
$\mf{n}$, the Lie algebra of $N$. Consider the path $\gamma : [0,
\left\lvert m \right\rvert ] \to Z$ defined so that $\gamma(
\left\lvert m \right\rvert ) = g$ and
$\gamma(t) = \exp( t z )$ for some $z\in \mf{n}$. Note that $\exp(z) =
x$ if $m>0$ and $\exp(z) = x^{-1}$ if $m<0$. In particular, $z$ does \emph{not} depend on $m$.
By \cite[Prop 4.1(1)]{Pittet97}, the length of $\gamma$ is 
$\| z \|_\mf{n} \| g\|_{X_i}$. 
Then applying
\cite[Prop 4.1(2)]{Pittet97} to the curve $\gamma$, there is a
constant $C>0$ depending on $z$ so that
\begin{equation} \label{pitteteqn}
d_N(e, g) \leq C \|g\|_{X_i}^{1/i}.
\end{equation}
The quantity $d_N(e,g)$ is uniformly comparable to $\| g \|_X$, so
this proves the desired inequality for $g$ of the form $x^m$.

Now for any $g\in Z(G)\cap \gamma_i(G)$, write $g = \prod_{j=1}^k
x_j^{m_j}$ where $X_i = \{x_1,\dotsc, x_k\}$. Let $C$ be the largest
constant appearing in equation \ref{pitteteqn} as $x$ ranges over
$x_1,\dotsc, x_k$. Then there is some $D > 0$ so that
\begin{align*}
  \| g \|_X & \leq \sum_{j=1}^k \| x_j^{m_j} \|_X \\
  & \leq C \sum_{j=1}^k \| x_j^{m_j} \|_{X_i}^{1/i} \\
  & \leq C \sum_{j=1}^k |m_j|^{1/i} \\
  & \leq C k \left( \sum_{j=1}^k | m_j | \right)^{1/i} \\
  & \leq CkD \| g \|_{X_i}^{1/i}.
\end{align*}
The last step follows because $Z(G) \cap \gamma_i(G)$ is abelian. 
The penultimate step follows from the general fact that $(m_1^{1/i} +
\dotsb + m_k^{1/i})^i \leq k^i (m_1 + \dotsb + m_k)$ when $m_j \geq 1$
for all $j$. This completes the proof in the case that $G$ is torsion-free.

Now suppose $G$ is an arbitrary finitely generated nilpotent group.
There is a torsion-free normal subgroup $H$ of finite index in $G$.  Fix a
generating set $Y$ for $H$.  The map $i : H \to G$ is a quasi-isometry
because $H$ is finite index in $G$. In fact, because distinct points
in each of $G$ and $H$ are distance at least 1 and $i$ is injective,
it is easy to check that $i$ is bi-Lipschitz.  This means that there is
some $C\geq 1$ so that:
\begin{enumerate}
	\item For $g,h \in H$, 
	$$
		\frac{1}{C} \| g h^{-1} \|_Y \leq \| g h^{-1} \|_X \leq C \| g h^{-1} \|_Y.
	$$
	\item For every element $g \in G$, there exists $h \in H$ such that
	$$
		\| h g^{-1} \|_X \leq C.
		$$
\end{enumerate}

Fix generating sets $X_i$ for $Z(G) \cap \gamma_i(G)$
and $Y_i$ for $Z(H) \cap \gamma_i(H)$. 
We claim that $Z(H) \leq Z(G)$.
Indeed, if not then there exists $h \in Z(H)$, an integer $r \geq 1$,
and elements $x_1, \ldots, x_r \in G$ such that $h \in \zeta_{r+1}(G)
\setminus \zeta_r(G)$ and
$$
[h, x_1, \ldots, x_r] \in Z(G) \setminus \{1\}.
$$
Since $H$ has finite index in $G$ there exists $n \in \N$ such that $x_1^n \in H$.
By Lemma \ref{lem:commutatorproduct} we have 
\[
[h, x_1^n, \ldots, x_r] = [h, x_1, \ldots, x_r]^n.
\]
Since $H$ is normal we have $[h, x_1, \ldots, x_r] \in H$.
This implies $[h, x_1^n, \ldots, x_r] \neq 1$ because $H$ is torsion-free.
Therefore $[h,x_1^n]$ cannot be trivial, which contradicts the fact that $h \in Z(H)$.
By the aforementioned claim, $Z(G) \cap \gamma_i(G)$ contains $Z(H) \cap \gamma_i(H)$ as a subgroup.
In fact, it is not hard to show that $Z(H) \leq Z(G)$ and $\gamma_i(H)
\leq \gamma_i(G)$ are, in both cases, subgroups of finite index.
Hence, the inclusion
$i_2 : Z(H) \cap \gamma_i(H) \to Z(G) \cap \gamma_i(G)$ is a
bi-Lipschitz quasi-isometry with constant $D\geq 1$.

Now select $C' > 1$ such that inequality \ref{eq:distortion} holds for all
$g \in G$ that are finite order (again, there are only finitely many of them).
Next, let $g$ be an infinite order element in $G$ with 
$$g \in Z(G) \cap \gamma_i(G).$$
We can suppose, without loss of generality, that $X$ contains $X_i$.
Then since $i_2$ is a $D$-quasi-isometry, there exists $h \in Z(H) \cap \gamma_i(H)$ such that 
$$ \| h g^{-1} \|_{X} \leq  \| h g^{-1} \|_{X_i} \leq D.$$
Since $H$ is torsion-free, by enlarging $C$ if necessary we have
$$
\| h \|_Y \leq C \| h \|_{Y_i}^{1/i}.
$$
Thus,
\begin{equation} \label{QI1}
\| g \|_X = \| g h^{-1} h \|_X \leq \| g h^{-1} \|_X + \| h \|_X
\leq C + \| h \|_X.
\end{equation}
And, further,
\begin{equation} \label{QI2}
\| h \|_X \leq C \| h \|_Y \leq C^2 \| h \|_{Y_i}^{1/i}.
\end{equation}
To finish,
\begin{equation} \label{QI3}
\| h \|_{Y_i} \leq D \| h \|_{X_i} =  D \| g g^{-1} h \|_{X_i} \leq D(\| g \|_{X_i} + D ).
\end{equation}
The desired inequality follows from equations \ref{QI1}--\ref{QI3}, as
all additive constants can be absorbed into the multiplicative
constants.
\end{proof}

Next,  we show a technical lemma that will be important in our main proofs:

\begin{lemma} \label{lem:technical}
Let $G$ be a nilpotent group of class $c$ generated by a finite set
$X$. Fix a number $0<i\leq c$, and fix generating sets $Y_0$ for $\zeta_i(G)$ and $Y$ for $Z(G)$.
There exists $C_i > 0$ such that for any $g \in \zeta_i(G) \setminus \zeta_{i-1}(G)$, there exists $x_1, \ldots, x_{i-1} \in X$ such that for any $\gamma \in \zeta_{i-1}(G)$,
$$
0 < \|[g \gamma, x_1, \ldots, x_{i-1}] \|_Y \leq C_i \|g \|_{Y_0}.
$$
In fact, there is some $F_i > 0$ so that
$$
0 < \|[g \gamma, x_1, \ldots, x_{i-1}] \|_{X} \leq F_i \|g \|_{Y_0}^{1/t}, 
$$
where $t$ is the minimal $k$ satisfying $[g \gamma, x_1, \ldots, x_{i-1}] \notin \gamma_{k+1}(G)$.
\end{lemma}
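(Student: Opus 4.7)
The strategy is to build the elements $x_1,\dotsc,x_{i-1}$ recursively by descending the upper central series, and then reinterpret the iterated commutator as a homomorphism on $\zeta_i(G)/\zeta_{i-1}(G)$. By the definition of the upper central series, $g\in \zeta_i(G)\setminus \zeta_{i-1}(G)$ means there exists some $y\in G$ with $[g,y]\notin \zeta_{i-2}(G)$. By Lemma~\ref{lem:commutatorproduct} (with $k=i$, $\ell=c$), the map $y\mapsto [g,y]$ factors through a homomorphism $G/\zeta_{c-1}(G)\to \zeta_{i-1}(G)/\zeta_{i-2}(G)$, so the image of at least one $x_1\in X$ must be nonzero. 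Iterating the same argument on $[g,x_1]\in \zeta_{i-1}(G)\setminus \zeta_{i-2}(G)$ and so on produces $x_2,\dotsc,x_{i-1}\in X$ with $[g,x_1,\dotsc,x_{i-1}]\in \zeta_1(G)\setminus \zeta_0(G) = Z(G)\setminus\{1\}$.

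Stringing together the bilinear homomorphisms from Lemma~\ref{lem:commutatorproduct} stage by stage assembles the iterated commutator into a single homomorphism $\bar\phi:\zeta_i(G)/\zeta_{i-1}(G)\to Z(G)$ with $\bar\phi([g])=[g,x_1,\dotsc,x_{i-1}]$, where the ``$\bmod\, \zeta_0(G)$'' ambiguity vanishes because $\zeta_0(G)=\{1\}$. For any $\gamma\in\zeta_{i-1}(G)$ we have $g\gamma\equiv g$ modulo $\zeta_{i-1}(G)$, so $[g\gamma,x_1,\dotsc,x_{i-1}]=\bar\phi([g])\neq 1$, yielding the positivity uniformly in $\gamma$. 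Since $\bar\phi$ is a homomorphism of finitely generated abelian groups it is Lipschitz, so $\|\bar\phi([g])\|_Y\leq C(x_1,\dotsc,x_{i-1})\,\|g\|_{Y_0}$ (noting that the word length of $[g]$ with respect to the image of $Y_0$ is bounded by $\|g\|_{Y_0}$). Because $(x_1,\dotsc,x_{i-1})$ ranges over the finite set $X^{i-1}$, the constant $C_i:=\max_{X^{i-1}} C(x_1,\dotsc,x_{i-1})$ is uniform, establishing the first inequality.

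For the $X$-norm estimate, set $h:=[g\gamma,x_1,\dotsc,x_{i-1}]$, which by the definition of $t$ lies in $Z(G)\cap \gamma_t(G)$. Fix a generating set $X_t$ for $Z(G)\cap \gamma_t(G)$ and apply Lemma~\ref{lem:distortion} to obtain $\|h\|_X\leq C\,\|h\|_{X_t}^{1/t}$. Since $Z(G)\cap \gamma_t(G)$ is a finitely generated subgroup of the finitely generated abelian group $Z(G)$, it is undistorted in $Z(G)$, so $\|h\|_{X_t}\leq C'\,\|h\|_Y$. Chaining these with the first inequality produces $\|h\|_X\leq F(t)\,\|g\|_{Y_0}^{1/t}$, and since $t$ ranges over the finite set $\{i,\dotsc,c\}$, taking $F_i:=\max_t F(t)$ gives the desired uniform constant. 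The main obstacle is the bookkeeping for verifying that the composition of the quotient-level commutator homomorphisms from Lemma~\ref{lem:commutatorproduct} really coincides with the genuine iterated commutator in $G$ at each stage; the payoff is that the successive moduli $\zeta_{i-j-1}(G)$ shrink to $\zeta_0(G)=\{1\}$, upgrading the final congruence to an on-the-nose equality in $Z(G)$.
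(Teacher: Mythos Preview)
Your proposal is correct and follows essentially the same approach as the paper: both arguments use Lemma~\ref{lem:commutatorproduct} to see that the iterated commutator $g\mapsto [g,x_1,\dotsc,x_{i-1}]$ is a homomorphism in the first variable (the paper makes this explicit by writing $g=\prod g_j$ and expanding, you phrase it as ``homomorphism of finitely generated abelian groups is Lipschitz''), and both then apply Lemma~\ref{lem:distortion} together with the undistortedness of $Z(G)\cap\gamma_t(G)$ inside $Z(G)$ for the second inequality. Your recursive construction of the $x_j$ is a bit more explicit than the paper's one-line ``since $G$ is nilpotent, there exist\dots'', but the substance is identical.
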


\begin{proof}
Let $g \in \zeta_i(G) \setminus \zeta_{i-1}(G)$ be given.
Since $G$ is nilpotent, there exists $x_1, \ldots, x_{i-1} \in X$ so that
$$
[g, x_1, \ldots, x_{i-1}] \in Z(G) \setminus \{1 \}.
$$
Note that for any $x \in \zeta_i(G)$ we have that 
$$
[x, x_1, \ldots, x_{i-1}] \in Z(G).
$$
Write $g = \prod_{i=1}^n g_i$ where $g_i \in Y_0$ and $n$ is the  word
length of $g$ with respect to $Y_0$.
Applying Lemma~\ref{lem:commutatorproduct} repeatedly gives
\begin{eqnarray*}
[g, x_1, \ldots, x_{i-1}] &=& [g_1 g_2 \cdots g_n, x_1, \ldots, x_{i-1}] \\
&=& [g_1, x_1, \ldots, x_{i-1}][g_2, x_1, \cdots x_{i-1}] \cdots [g_n, x_1, \ldots, x_{i-1}].
\end{eqnarray*}
Set $Y'$ to be $Y$ union the set of all elements of the form $[\beta, \alpha_1, \alpha_2, \ldots, \alpha_{i-1}]$ where $\beta \in Y_0$ and $\alpha_i \in X$.
Notice that $Y'$ does not depend on $g$. Further, by our above computation, we have
$$
\| [g, x_1, \ldots, x_{i-1}] \|_{Y'} \leq n.
$$
Because $Y'$ is finite, $(Z(G), d_Y)$ is bi-Lipschitz equivalent to
$(Z(G), d_{Y'})$.
This gives $C_i > 0$, depending only on $Y'$, such that
\begin{equation} \label{assertionone}
0 < \| [g, x_1, \ldots, x_{i-1} ] \|_Y \leq C_i \|[g, x_1, \ldots, x_{i-1}] \|_{Y'}
\leq  C_i n = C_i \| g \|_{Y_0}.
\end{equation}
Let $\gamma \in \zeta_{i-1}(G)$ be arbitrary.
Then as $[g, x_1, \ldots, x_{i-1}]$ and $[\gamma, x_1, \ldots, x_{i-1}]$ are central,
$$
[g, x_1, \ldots, x_{i-1} ] = [g \gamma, x_1, \ldots, x_{i-1} ],
$$
so the proof of the first assertion is complete. 

Fix generating sets $X_j$ for $\gamma_j(G) \cap Z(G)$ for each $1\leq
j \leq c$. These sets can be chosen independently of $g$ and $i$.
By Lemma \ref{lem:distortion}, for each $j$ we have that there exists $D_j > 1$ such that for all $w \in \gamma_j(G) \cap Z(G)$
$$
\|w \|_X \leq D_j \| w \|^{1/j}_{X_j}.
$$
Set $D$ to be the maximal such $D_j$. Notice that $D$ only depends on $X$ and $G$.
Since $\gamma_t(G) \cap Z(G)$ is a subset of the abelian group, $Z(G)$, we have that there exists $E > 1$, depending only on $Y$ and the selection of $X_j$, such that
$$
\|[g \gamma, x_1, \ldots, x_{i-1}] \|_{X_t} \leq E \| [g \gamma, x_1, \ldots, x_{i-1}]  \|_{Y} \leq E^2  \| [g \gamma, x_1, \ldots, x_{i-1}] \|_{X_t}.
$$
Combining these inequalities with Inequality \ref{assertionone} gives
\begin{eqnarray*}
0 < \|[g \gamma, x_1, \ldots, x_{i-1}] \|_{X}  &\leq& D \| [g \gamma, x_1, \ldots, x_{i-1} ] \|_{X_t}^{1/t} \\
&\leq&  E D \| [g \gamma, x_1, \ldots, x_{i-1} ] \|_{Y}^{1/t} \\
&\leq&  E D C_i \| g \|_{Y_0}^{1/t} = F_i \| g \|_{Y_0}^{1/t},
\end{eqnarray*}
for some constant $F_i$ that depends only on $i$ and our choice of generating sets, as desired.
\end{proof}

For any $g \in G$ of infinite order, the \emph{weight $\nu_G(g)$ of $g$ in the group $G$} is the maximal $k$ such that $\left< g \right> \cap \gamma_k(G) \neq \{ 1 \}$.
If $G$ is a group and $m$ a natural number, let $G^m$ denote the
normal subgroup of $G$ generated by all $m^{th}$ powers of elements of
$G$. When $G$ is nilpotent we have $[G : G^m] < \infty$ for any $m$
(see, for instance, \cite[p.\ 20, Lemma 4.2]{MR0283083}).
We need the following technical result for Lemma \ref{lem:torsionlengths}.

\begin{lemma}
	\label{lem:distortion1}
	Let $G$ be a nilpotent group generated by a finite set $X$.
Fix a positive integer $i$.
Then there exists a constant $C > 1$ such that for all $m \in \N$ and all $g \in (Z(G) \cap \gamma_i(G))^m$ with $\nu_G(g) = i$, we have 
$$
m \leq C \| g \|^i_X.
$$
\end{lemma}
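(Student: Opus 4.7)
The plan is to exploit that $g \in (Z(G)\cap\gamma_i(G))^m$ forces $g = h^m$ for some $h \in Z(G)\cap\gamma_i(G)$ (in an abelian group the subgroup of $m$-th powers coincides with the set of $m$-th powers). The weight hypothesis $\nu_G(g)=i$ will force $h$ to have a non-trivial weight-$i$ component, so that some coordinate of $g$ of weight exactly $i$ has absolute value at least $m$. Comparing this coordinate to $\|g\|_X$ via a standard Malcev-coordinate bound then yields $m \leq C\|g\|_X^{i}$.

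I would first reduce to $G$ torsion-free in exactly the way the last third of the proof of Lemma \ref{lem:distortion} does: pass to a torsion-free normal subgroup $H$ of finite index, use that $Z(H)\cap\gamma_i(H)$ is of finite index in $Z(G)\cap\gamma_i(G)$, and note that $H\hookrightarrow G$ is a bi-Lipschitz quasi-isometry. With $G$ torsion-free, $Z(G)\cap\gamma_i(G)$ is a free abelian group, and I would choose a Malcev basis $(e_1,\dotsc,e_d)$ of $G$ adapted to the lower central series: each $e_j$ has a well-defined weight $w(j)\in\{1,\dotsc,c\}$ (i.e.\ $e_j\in\gamma_{w(j)}(G)\smallsetminus\gamma_{w(j)+1}(G)$), the weights $w(j)$ are non-decreasing in $j$, and a subset of the $e_j$'s is a free $\Z$-basis of $Z(G)\cap\gamma_i(G)$. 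Such an adapted Malcev basis is standard for finitely generated torsion-free nilpotent groups.

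The one non-trivial ingredient I would import is the classical Malcev-coordinate estimate: writing $g = e_1^{t_1(g)}\dotsb e_d^{t_d(g)}$ uniquely with $t_j(g)\in\Z$, there exists $C>0$ depending only on $G$ and $X$ such that
\[
  |t_j(g)|\;\leq\;C\,\|g\|_X^{\,w(j)}\qquad(1\leq j\leq d).
\]
This is a folklore consequence of the Pittet/Osin scaling estimates already used in Lemma \ref{lem:distortion} (see e.g.\ Breuillard's work on geometry of nilpotent groups). This is the step I expect to be the main obstacle, in that it is not proved in the paper and must be cited; the rest of the argument is purely formal.

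To finish, write $g = h^m$ with $h\in Z(G)\cap\gamma_i(G)$. Since $Z(G)\cap\gamma_i(G)$ is abelian and part of the Malcev basis, for every index $j$ with $e_j\in Z(G)\cap\gamma_i(G)$ we have $t_j(g)=m\,t_j(h)$. The hypothesis $\nu_G(g)=i$ gives $g\notin\gamma_{i+1}(G)$, so some coordinate of $g$ at weight exactly $i$ must be non-zero; hence there is $j_0$ with $e_{j_0}\in Z(G)\cap\gamma_i(G)$, $w(j_0)=i$, and $t_{j_0}(h)\neq0$, so $|t_{j_0}(g)|\geq m$. Combining with the displayed coordinate bound,
\[
  m\;\leq\;|t_{j_0}(g)|\;\leq\;C\,\|g\|_X^{\,w(j_0)}\;=\;C\,\|g\|_X^{\,i},
\]
as desired.
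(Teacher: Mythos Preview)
Your approach is correct and takes a somewhat different route from the paper. The paper does not first reduce to torsion-free $G$; instead it passes to the quotient $G/\gamma_{i+1}(G)$, chooses generators $Y$ of $Z(G)\cap\gamma_i(G)$ whose images modulo $\gamma_{i+1}(G)$ freely generate a finite-index subgroup $\Pi$, and applies Osin's distortion theorem \cite[Theorem~2.2]{MR1872804} directly to $\Pi$ inside $G/\gamma_{i+1}(G)$, obtaining $\|h\|_{f(Y)}\leq D\|h\|_{f(X)}^i$. The torsion in the relevant abelian quotient is absorbed into a fixed integer $N$ (the index of $\pi(\langle Y\rangle)$), and one checks $f(g^N)\in\Pi^m$ so that $\|f(g^N)\|_{f(Y)}\geq m$. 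Your route---reduce to torsion-free $G$, write $g=h^m$ using that $Z(G)\cap\gamma_i(G)$ is abelian, and invoke the global Malcev-coordinate bound $|t_j(g)|\leq C\|g\|_X^{w(j)}$---is cleaner, but trades the paper's explicit citation of Osin for a folklore consequence of it. Both arguments have the same analytic core.

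One small repair: your requirement that ``a subset of the $e_j$'s is a free $\Z$-basis of $Z(G)\cap\gamma_i(G)$'' is both unnecessary and not obviously compatible with adaptation to the lower central series, since $G/(Z(G)\cap\gamma_i(G))$ need not be torsion-free even when $G$ is. You do not need it. Take instead a Malcev basis adapted to the \emph{isolated} lower central series $\gamma_k^*(G)=\sqrt{\gamma_k(G)}$; then for $w(j_0)=i$ the coordinate $t_{j_0}$ restricted to $\gamma_i^*(G)$ is just a coordinate of the homomorphism $\gamma_i^*(G)\to\gamma_i^*(G)/\gamma_{i+1}^*(G)$, hence additive, so $t_{j_0}(h^m)=m\,t_{j_0}(h)$ automatically. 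The hypothesis $\nu_G(g)=i$ says no power of $g$ lies in $\gamma_{i+1}(G)$, which is exactly $g\notin\gamma_{i+1}^*(G)$, so some weight-$i$ coordinate $t_{j_0}(g)$ is nonzero and the inequality $m\leq|t_{j_0}(g)|\leq C\|g\|_X^i$ follows.
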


\begin{proof}
Select $Y = \{ x_1, \ldots, x_r \}$ so that the image of $Y$ under
$$\pi : (Z(G) \cap \gamma_i(G)) \to (Z(G) \cap \gamma_i(G)) / \gamma_{i+1}(G)$$ 
generates a free abelian group of rank $r$, where $r$ is the rank of 
$(Z(G) \cap \gamma_i(G)) / \gamma_{i+1}(G)$.
Select $N$ to be the order of $\pi(Z(G) \cap \gamma_i(G))/\pi(\left<Y\right>)$.

Let $f$ be the projection $G \to G/\gamma_{i+1}(G)$.
We apply \cite[Theorem 2.2]{MR1872804} to the torsion-free subgroup $\Pi = f(\left< x_1, \ldots,
  x_r \right>) \leq G/\gamma_{i+1}(G)$ to get $D > 1$ such that
$$
\sup\limits_{h \in \Pi \cap B_{f(X)}(n)} \| h \|_{f(Y)} \leq D n^{i}.
$$
Thus, if $\| h \|_{f(X)} = n$, then $\| h \|_{f(Y)} \leq D n^i = D (\| h \|_{f(X)})^i$.
That is, 
\begin{equation} \label{eqn:compareh}
\| h \|_{f(Y)} \leq D \| h \|_{f(X)}^i.
\end{equation}

Now suppose $g$ is an element of $(Z(G) \cap \gamma_i(G))^m$ with $\nu_G(g) = i$.
Since $\nu_G(g) = i$, we have that $\pi(g)$ is infinite order.
Thus, we can write $g^N = h \gamma$ where $h \in \left< Y \right>$ and $\pi(\gamma)$ is trivial.
Thus, $\gamma \in \gamma_{i+1}(G)$.
The map $f|_{ \left< Y \right> }$ is an injection, thus
$$
\| f(h) \|_{f(Y)} = \| h \|_{Y}.
$$
Finally, by the fact that $g^N \equiv h \mod \gamma_{i+1}(G)$ and Inequality (\ref{eqn:compareh}), we have
\begin{eqnarray*}
N \| g \|_X &\geq& \| g^N \|_X \geq \| f(g^N) \|_{f(X)} \\
&=& \| f(h) \|_{f(X)} \geq D^{1/i} \| f(h) \|_{f(Y)}^{1/i}.
\end{eqnarray*}
Notice that for any abelian group, $A$, and $\ell \in \N$ we have
$$A^{\ell} = \left< \{ x^{\ell} : x \in A \} \right>.$$
Using additive notation, this becomes
$$
A^{\ell} := \left< \{ \ell x : x \in A \} \right>
= \ell \{ x : x \in A \} = \ell A.
$$
So we have
$$mN A = m ( N A) = m \{ n x : x \in A \}.$$
To apply this, note that $A = (Z(G) \cap \gamma_i(G))$ is an abelian group, as it is contained in the center.
For any element $y \in N A$, by the definition of $N$, we have
$f(y)$ is an element of $\Pi$.
Thus, $f(m y) = m f(y)$ is an element of $\Pi^m$, and so it follows that
$$
f(A^{mN}) \leq \Pi^m.
$$
In particular, $g^N \in (Z(G) \cap \gamma_i(G))^{Nm}$, so we have
$$
f(g^N) \in \Pi^m.
$$
Since $m f(Y)$ is a free basis for $\Pi^m$, $f(Y)$ is a free basis for $\Pi$, and $f(h) = f(g^N)$, we conclude that
$$
\| f(h) \|_{f(Y)} = \| f(g^N) \|_{f(Y)} \geq m,
$$
so we are done.
\end{proof}

With the previous lemmas in hand, we finish with a proof that gives some control on the word lengths of elements in $G^m$.

\begin{lemma} \label{lem:torsionlengths}
Let $\tilde G$ be a finitely generated nilpotent group of nilpotence $c$.
There exists $f \in \N$ such that $G = \tilde G^f$ is a torsion-free characteristic subgroup of $\tilde G$ of finite index.
Let $g \in G$, $X$, and $t \in \N$ be as in Lemma \ref{lem:technical}.
Then there exists $C > 1$, $M \in \N$, depending only on $G$, such that if $g \in G^{Mm}$, we have that
$$
\| g \|_X \geq C m^{1/t}.
$$
\end{lemma}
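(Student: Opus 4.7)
My plan is to split the proof into two parts: constructing $f$, and proving the word-length lower bound on $\|g\|_X$.

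The existence of $f$ is routine. Since $\tilde G$ is residually finite and its torsion subgroup $T$ is finite, choose a normal finite-index subgroup $H\le\tilde G$ with $H\cap T=1$. Any multiple $f$ of $[\tilde G:H]$ works: every $f$-th power in $\tilde G$ lies in $H$, so $G:=\tilde G^f\le H$ is torsion-free. Being verbal, $\tilde G^f$ is automatically characteristic, and it has finite index in $\tilde G$ by the fact cited just before Lemma~\ref{lem:distortion1}.

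For the main estimate, let $g$, $X$, $i$, $t$, $Y_0$, $Y$ be as in Lemma~\ref{lem:technical}. That lemma produces $x_1,\ldots,x_{i-1}\in X$ such that $w:=[g,x_1,\ldots,x_{i-1}]$ is a nontrivial element of $Z(G)\cap\gamma_t(G)\setminus\gamma_{t+1}(G)$. I aim to apply Lemma~\ref{lem:distortion1} to $w$ in the role of $g$ and $t$ in the role of $i$. The weight hypothesis $\nu_G(w)=t$ holds automatically: torsion-freeness of $G$ makes $\gamma_t(G)/\gamma_{t+1}(G)$ torsion-free, so $\langle w\rangle\cap\gamma_{t+1}(G)=\{1\}$. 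If I can show that $w\in(Z(G)\cap\gamma_t(G))^m$ for an appropriate choice of $M$, Lemma~\ref{lem:distortion1} will give $m\le C'\|w\|_X^t$. Combining with the elementary universal bound $\|w\|_X\le 2^{i-1}\|g\|_X+K$ for iterated commutators with fixed entries $x_1,\ldots,x_{i-1}$ drawn from the finite set $X$ then yields $\|g\|_X\ge Cm^{1/t}$ for $m$ above a fixed threshold; the finitely many smaller values of $m$ are handled by shrinking $C$, noting that $g\neq 1$ forces $\|g\|_X\ge 1$.

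The heart of the argument, and the anticipated main obstacle, is the divisibility claim $w\in(Z(G)\cap\gamma_t(G))^m$ whenever $g\in G^{Mm}$, for some $M$ depending only on $G$. My plan is to write $g$ as a product of $Mm$-th powers (using the observation that conjugates of $Mm$-th powers are $Mm$-th powers, so $G^{Mm}$ is generated as a subgroup by such), and then exploit the bilinearity of the iterated commutator on the associated graded $\bigoplus_k\gamma_k(G)/\gamma_{k+1}(G)$ of the lower central series, which is a Lie-ring incarnation of Lemma~\ref{lem:commutatorproduct}. Each $Mm$-th-power factor contributes via a Hall-Petresco style identity of the form $[y^{Mm},x_1,\ldots,x_{i-1}]\equiv[y,x_1,\ldots,x_{i-1}]^{Mm}\pmod{\gamma_{t+1}(G)}$, with correction terms carrying binomial coefficients $\binom{Mm}{k}$ for $2\le k\le c-t$ and values in the deeper filtration pieces $Z(G)\cap\gamma_s(G)$ for $s>t$. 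Choosing $M$ divisible by $(c-t)!$ forces each $\binom{Mm}{k}$ to be a multiple of $m$, so every correction term is itself an $m$-th power in $Z(G)\cap\gamma_s(G)\le Z(G)\cap\gamma_t(G)$. Iterating this clean-up descent along the finite filtration $Z(G)\cap\gamma_t(G)\ge Z(G)\cap\gamma_{t+1}(G)\ge\cdots\ge\gamma_c(G)\ge 1$ eventually expresses $w$ as a genuine $m$-th power in $Z(G)\cap\gamma_t(G)$, with $M$ depending only on the nilpotency class $c$. The careful bookkeeping of this Hall-Petresco descent, and the verification that a single universal $M$ works for all $g$ and $m$, is the technical hurdle.
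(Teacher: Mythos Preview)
Your overall architecture matches the paper's: construct $f$, push $g$ to a central iterated commutator $w=[g,x_1,\dots,x_{i-1}]$, show $w$ lies in an $m$-th power subgroup, invoke Lemma~\ref{lem:distortion1}, and compare $\|w\|_X$ with $\|g\|_X$. Two points deserve comment.

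First, a genuine gap: your claim that torsion-freeness of $G$ forces $\gamma_t(G)/\gamma_{t+1}(G)$ to be torsion-free is false. Lower-central-series quotients of a torsion-free nilpotent group can carry torsion (for instance, the subgroup of $U_4(\Z)$ generated by $e_{12},e_{23}^2,e_{34},e_{13},e_{24},e_{14}$ is torsion-free but has $\gamma_2/\gamma_3\cong\Z^2\times\Z/2$). So $\nu_G(w)=t$ does not come for free; the paper handles this by folding into $M$ an extra factor $D$ that kills the torsion in every quotient $G/\gamma_n(G)$, and then arguing that $w\in Z(G)^{Dm}$ together with $w\in\gamma_t(G)$ forces $w\in(Z(G)\cap\gamma_t(G))^m$ with the correct weight.

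Second, your route to the divisibility claim is harder than necessary and not yet justified. You propose to write $g=\prod_j y_j^{Mm}$ and expand $[y_j^{Mm},x_1,\dots,x_{i-1}]$ via Hall--Petresco. But the $y_j$ are arbitrary elements of $G$, not of $\zeta_i(G)$, so the individual pieces $[y_j,x_1,\dots,x_{i-1}]$ are not central, and neither the product formula for $[\prod_j y_j^{Mm},x_1,\dots,x_{i-1}]$ nor the location of the correction terms is as clean as your sketch suggests. The paper sidesteps this entirely: since $G^{Mm}$ is normal, one has $w\in G^{Mm}$ automatically, and it suffices to prove the abstract containment $G^{Mn}\cap Z(G)\le Z(G)^{Dn}$. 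That containment is established by a short Hall--Petresco induction along the \emph{upper} central series (showing $\zeta_d(G)^{d!\,(d-1)!\cdots 2!\,n}\cap Z(G)\le Z(G)^n$), which never touches the commutator $w$ directly.
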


\begin{proof} 
Set $\tau(\tilde G)$ to be the set of all elements of finite order in $\tilde G$.
By \cite[p.\ 13, Chapter 1, Corollary 10]{MR713786}, this is a finite characteristic subgroup of $\tilde G$.
Since $G$ is residually finite and $\tau(H)$ is finite, there exists a finite $Q$ that fully detects $\tau(\tilde G)$. 
Set $f$ to be the exponent of $Q$ and set $G$ to be the characteristic
finite-index subgroup $\tilde G^{f}$ \cite[p.\ 20, Lemma 4.2]{MR0283083},
Then the map $\tilde G \to Q$ factors through $\tilde G/G$, and thus
$\tau(\tilde G)$ is fully detected by $\tilde G/G$.
Since $\tau(\tilde G)$ contains all the torsion elements in $\tilde G$, it follows that $G$ is torsion-free.

We will show by induction on $d$ that for all $n > c$,
\begin{equation} \label{eqn:claim}
(\zeta_{d}(G))^{(d)! \cdots 2! n} \cap Z(G) \leq Z(G)^{n}.
\end{equation}
The base case $\zeta_1(G) = Z(G)$ is immediate.
For the inductive step, set $M = (d)! (d-1)! \cdots 2!$ and let $H =
\zeta_{d}(G) \leq G$.
Let $h \in H^{Mn} \cap Z(G)$.
Since $h$ is in $H^{Mn}$ we can write
$$
h = g_1^{Mn} g_2^{Mn} \cdots g_k^{Mn} \in Z(G),
$$
where $g_1, \ldots, g_k$ are elements in $H$.

To proceed, let $\tau_n(x_1, x_2, \ldots, x_k) = \tau_n(\overline{x})$ be the $n$th \emph{Petresco word} \cite[p. 40]{MR0283083}, which is defined by the recursive formula,
$$
x_1^n x_2^n \cdots x_k^n = \tau_1(\overline{x})^n \tau_2(\overline{x})^{{n \choose 2}} \cdots \tau_n(\overline{x})^{n \choose n-1}.
$$
By the Hall-Petresco Theorem \cite[p. 41, Theorem 6.3]{MR0283083}, we
have that $\tau_n(H) \subset \gamma_{n}(H)$ for all $n \in \N$.
Thus, replacing $n$ with $Mn$ and using the Hall-Petresco Theorem, we get:
$$
g_1^{Mn} g_2^{Mn} \cdots g_k^{Mn} = \tau_1(\overline{g})^{Mn} \tau_2(\overline{g})^{{{Mn} \choose 2}} \cdots \tau_d(\overline{g})^{{Mn} \choose d}.
$$
By the Hall-Petresco Theorem, $\tau_k(\overline{g}) \in
\zeta_{d-1}(G)$ for all $k>1$, and by definition $h = g_1^{Mn} g_2^{Mn} \cdots g_k^{Mn}
\in \zeta_{d-1}(G)$. Therefore, because $G/\zeta_{d-1}(G)$ is torsion-free, $\tau_1(\overline{g})$ is in $\zeta_{d-1}(G).$
We conclude that, for each $1 \leq k \leq d$, there exists $z_k \in \zeta_{d-1}(G)$ such that
$$
\tau_k(\overline{g})^{Mn \choose k} = (z_k)^{\frac{M}{(d)!}n} \in \zeta_{d-1}(G)^{\frac{M}{(d)!}}.
$$
Further,
$$
\frac{M}{d!} = (d-1)! \cdots 2!.
$$
Hence, $h \in (\zeta_{d-1}(G))^{(d-1)! (d-2)! \cdots 2!}$, so by the inductive hypothesis, we must have
$h \in Z(G)^{n}$, which completes the proof of equation \ref{eqn:claim}.

Let $D$ be the product of all finite order elements in $G/ \gamma_{n}(G)$ for all $n = 1, \ldots, c$.
Selecting $d = c$ in equation \ref{eqn:claim} we get, for $M = (c)! (c-1)! \cdots 2! D$, and $n > c$,
\begin{equation} \label{eqn:tough}
G^{Mn} \cap Z(G) \leq Z(G)^{D n}.
\end{equation}

Now suppose $g \in G^{Mm}$.
By Lemma \ref{lem:technical}, there exists $x_1, \ldots, x_{i-1} \in X$ such that
$[g,x_1, \ldots, x_{i-1}] \in \gamma_t(G) \cap Z(G)$.
Thus, as $G^{Mm}$ is normal, we have, by equation \ref{eqn:tough},
$[g,x_1, \ldots, x_{i-1}] \in Z(G)^{D m}$.
Hence, by our choice of $D$ and the fact that $Z(G)$ is a free abelian group, we have $\nu_G([g,x_1, \ldots, x_{i-1}]) = t$.
Thus, applying Lemma \ref{lem:distortion1} gives $C_1 > 0$, depending only on $G$, such that
$$
\| [g, x_1, \ldots, x_{i-1}] \|_X > C_1 m^{1/t}.
$$
A simple counting argument gives a $C_2 > 0$, depending only on $G$, such that
$$
\| g \|_X \geq C_2 \| [g, x_1, \ldots, x_{i-1}] \|_X.
$$
Thus, we have $C > 0$, depending only on $G$, such that
$$
\| g \|_X > C m^{1/t},
$$
as desired.
\end{proof}

\section{Some examples and basic results} \label{sec:examples}

\subsection{Abelian groups} \label{sec:abeliangroups}

In this section we discuss some facts concerning abelian groups and present a proof of Theorem \ref{thm:abelian}.
This begins with the simplest torsion-free group.
Fix $\{ 1 \}$ as the generating set $\Z$.
Then $B_\Z(n) = \{ -n, -n+1, \ldots , n-1, n \}$.
Clearly, $B_\Z(n)$ is fully detected by $\Z/(2n+1) \Z$.
Further, any quotient fully detecting $B_\Z(n)$ has cardinality greater than $2n$.
So we get $\G_\Z(n) \approx n$.
This result generalizes immediately to all torsion-free finitely
generated abelian groups, and more generally to all finitely generated abelian groups.

\begin{corollary} \label{prop:abelian}
Let $A$ be a finitely generated abelian group.
Then $\G_A(n) \approx n^{\dim(A)}$.
\end{corollary}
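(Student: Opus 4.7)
The plan is to establish matching upper and lower bounds of $n^{\dim(A)}$, first for free abelian groups $\mathbb{Z}^k$, then deduce the general case by killing torsion.

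For the upper bound on $\Phi_{\mathbb{Z}^k}$, I would apply Lemma~\ref{lem:directproducts} inductively together with the computation $\Phi_{\mathbb{Z}}(n) \approx n$ already carried out in the paragraph preceding the statement. This gives
\[
\Phi_{\mathbb{Z}^k}(n) \preceq \Phi_{\mathbb{Z}}(n)^k \approx n^k.
\]
Concretely, if $B_{\mathbb{Z}}(n)$ is fully detected by $\mathbb{Z}/(2n+1)\mathbb{Z}$ via the quotient map, then the product map $\mathbb{Z}^k \to (\mathbb{Z}/(2n+1)\mathbb{Z})^k$ fully detects the ball in $\mathbb{Z}^k$ taken with respect to the standard generating set, and the codomain has size $(2n+1)^k$.

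For the lower bound, I would use the elementary observation that a fully detecting quotient must have cardinality at least $|B_A(n)|$, since the restricted map is by definition injective. Fix any finite generating set $X$ of $\mathbb{Z}^k$; then there is a constant $c > 0$ with $|B_{\mathbb{Z}^k}^X(n)| \geq c n^k$ for all $n$ (this is the standard volume estimate for balls in $\mathbb{Z}^k$, and in fact is a very special case of Bass's word growth formula cited in the introduction). Consequently any quotient fully detecting $B_{\mathbb{Z}^k}^X(n)$ has order at least $cn^k$, so $\Phi_{\mathbb{Z}^k}(n) \succeq n^k$.

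For the general case, write $A = \mathbb{Z}^{\dim(A)} \oplus T$ where $T$ is the finite torsion subgroup of $A$. By Proposition~\ref{prop:Treduction} we have $\Phi_A \approx \Phi_{A/T} = \Phi_{\mathbb{Z}^{\dim(A)}} \approx n^{\dim(A)}$, completing the proof. The main step is really just noting the ball-volume lower bound; no real obstacle arises since the direct-product structure and Proposition~\ref{prop:Treduction} handle everything else cleanly.
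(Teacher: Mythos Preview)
Your proof is correct and follows the same route as the paper: reduce to the torsion-free case via Proposition~\ref{prop:Treduction} (the paper cites its immediate corollary, Corollary~\ref{cor:tfreduction}), then handle $\mathbb{Z}^k$ directly. The paper's proof simply declares the torsion-free computation ``straightforward'' without writing it out; your upper bound via Lemma~\ref{lem:directproducts} and lower bound via the ball-volume inequality $\lvert Q \rvert \geq \lvert B_{\mathbb{Z}^k}(n)\rvert \approx n^k$ are exactly the natural details one would supply.
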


\begin{proof}
By Corollary \ref{cor:tfreduction}, we may assume $A$ is torsion-free.
The computation in this case is straightforward.
\end{proof}

One salient consequence of Corollary \ref{prop:abelian} is that an
abelian group's full residual finiteness growth $\G_A$ matches its
word growth $w_A$. We now prove Theorem~\ref{thm:abelian}, which shows
that this property characterizes abelian groups in the class of
nilpotent groups.  It also demonstrates that although $\G_G$ and $w_G$
share properties, they are seldom the same.

\begin{proof}[Proof of Theorem \ref{thm:abelian}]
By Corollary \ref{cor:tfreduction}, we may assume that $G$ is torsion-free.
Let's further assume $G$ is not abelian.
Fix a Malcev basis $x_1, \ldots, x_k$ for $G$.
For every $n$, let $Q_n$ be a quotient fully detecting $B_G(n)$.
Let $c$ be the nilpotent class of $G$.
Fix a tuple $(x_1, \ldots, x_m)$ consisting of all the basis elements not in $\zeta_{c-1}(G)$.
We claim that there exists $C >0$ such that for any $\gamma \in \zeta_{c-1}(G)$, the image of
$x_1^{k_1} x_2^{k_2} \cdots x_m^{k_m} \gamma$
in $Q_{Cn}$ is nontrivial in $Q_{Cn}$ for any $|k_i| \leq n^2$ with $\sum_{i=1}^m |k_i| > 0$.
Indeed, by the second assertion of Lemma \ref{lem:technical} there exists $C > 0$ with
$[x_1^{k_1} x_2^{k_2} \cdots x_m^{k_m}, y_1, y_2, \ldots, y_{c-1}]$
being nontrivial and having word-length at most $Cn^{2/{c}} \leq Cn$ in $G$.
Thus, as nontrivial elements of $B_G(Cn)$ are nontrivial in $Q_{Cn}$,
the image of $x_1^{k_1} x_2^{k_2} \cdots x_m^{k_m} \gamma$ in $Q_{Cn}$
is nontrivial, so the claim is shown.

Consider the set 
$$
B^+(n) := \{ x_1^{k_1} x_2^{k_2} \cdots x_m^{k_m} \gamma : 1 \leq k_i \leq n^2, \gamma \in B_G(n) \cap \zeta_{c-1}(G) \}.
$$
Given any $x,y\in B^+(n)$, the above claim implies that $y^{-1}x$ has
nontrivial image in $Q_{Cn}$. It follows that $B^+(n)$ is fully detected by $Q_{Cn}$.
On the other hand, by comparing with the explicit calculations for
word growth in \cite{Bass72} and the appendix of \cite{MR623534} we see that the set $B^+(n)$ has cardinality at least $n^m w_G(n)$.
Thus, we have
$$
\G_G(n) \succeq n^m w_G(n),
$$
as desired.

\end{proof}

\subsection{Some non-abelian groups} \label{sec:heisenberg}

We begin this section with the simplest non-abelian example.
Recall that the \emph{discrete Heisenberg group} is given by
$$H_3 = \left< x, y, z : [x,y] = z , z \text{ is central } \right>.$$

\begin{proposition} \label{prop:heisenberg}
We have $\G_{H_3}(n) \approx n^6$.
\end{proposition}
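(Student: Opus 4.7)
The plan is to establish matching upper and lower bounds of order $n^6$ for $\Phi_{H_3}$. Both directions rest on a single quantitative input: the case $i=2$ of Lemma \ref{lem:distortion}, applied to $\langle z\rangle = Z(H_3)\cap\gamma_2(H_3)$, which gives a constant $C$ with $\|z^c\|_X \leq C\sqrt{|c|}$; conversely, $\|x^a y^b z^c\|_X \leq n$ forces $|a|,|b|\leq n$ and $|c|\leq Cn^2$. For the upper bound I would take $k := \lceil 2Cn^2\rceil+1$ and consider $Q_k := H_3/\langle\langle x^k,y^k\rangle\rangle$; the identity $yx^ky^{-1} = x^kz^{-k}$ shows that $z^k$ lies in the normal closure, so $Q_k$ is the mod-$k$ Heisenberg group of order $k^3 \asymp n^6$. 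The reduction of normal-form coordinates $\Z^3 \to (\Z/k)^3$ is injective on $B_{H_3}(n)$ by the ball estimate above, so $Q_k$ fully detects $B_{H_3}(n)$ and $\Phi_{H_3}(n)\preceq n^6$.

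For the lower bound I would adapt the strategy from the proof of Theorem \ref{thm:abelian}. Fix a large constant $D$, let $\varphi : H_3 \to Q$ fully detect $B_{H_3}(Dn)$, and consider
\[
  S(n) := \{\,x^a y^b z^c : 0 \leq a, b \leq n^2,\ |c|\leq n^2\,\},
\]
of cardinality $\asymp n^6$. To show $\varphi|_{S(n)}$ is injective, I would compute the ratio $g := (x^a y^b z^c)^{-1}(x^{a'} y^{b'} z^{c'})$ in normal form $x^{a''}y^{b''}z^{c''}$ using the explicit Heisenberg cocycle, noting $|a''|,|b''| \leq n^2$. When $(a'',b'')=(0,0)$ the ratio reduces to $z^{c'-c}$ with $|c'-c|\leq 2n^2$, so its $X$-length is at most $C'n$ and it is therefore nontrivial in $Q$. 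When $(a'',b'')\neq (0,0)$, the element $g$ lies in $\zeta_2(H_3)\setminus \zeta_1(H_3)$, so Lemma \ref{lem:technical} (with $i=2$, $t=2$, $Y_0 = X$) furnishes $x_1 \in X$ such that $[g,x_1] \in Z(H_3)\setminus\{1\}$ with $\|[g,x_1]\|_X \leq F\|g\|_X^{1/2} = O(n)$, since $\|g\|_X = O(n^2)$ by quadratic distortion of $\langle z\rangle$. Hence $[g,x_1]$, and so $g$ itself, has nontrivial image in $Q$. Choosing $D$ larger than all accumulated constants, $|Q| \geq n^6$, giving $\Phi_{H_3}(n) \succeq n^6$.

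The main obstacle I anticipate is simply bookkeeping the constants produced by the Heisenberg cocycle when two normal forms are multiplied: the $ab$-type cross term in the $z$-coordinate of the ratio can be of order $n^4$. However, this cross term lives entirely in the center and enters the argument only through $\|g\|_X$, which is still $O(n^2)$ by the quadratic distortion of $\langle z\rangle$; after taking the square root in the bound from Lemma \ref{lem:technical}, it contributes only an $O(n)$ correction and causes no loss.
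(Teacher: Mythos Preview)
Your argument is correct and follows essentially the same strategy as the paper: a mod-$k$ Heisenberg quotient with $k\asymp n^2$ for the upper bound, and an injection of a set of size $\asymp n^6$ into any detecting quotient via commutators for the lower bound. The only difference is cosmetic---where the paper computes the commutators $[w,y]=z^a$ and $[w,x]=z^{-b}$ by hand and uses the explicit identity $[x^n,y^j]z^k=z^{nj+k}$ to place powers of $z$ in the ball, you invoke Lemmas~\ref{lem:distortion} and~\ref{lem:technical} to the same effect.
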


\begin{proof}
Let $B_{H_3}(n)$ be the ball of radius $n$ in $H_3$ with respect to
the generating set $\{x,y,z \}$.
An exercise in the geometry of $H_3$ show that there is some $D>0$ so
that if $x^{\alpha_1} y^{\alpha_2} z^{\alpha_3} \in B_{H_3}(Dn)$ then
$\lvert \alpha_i \rvert \leq
  n$ for $i=1,2$ and $\lvert \alpha_3 \rvert \leq n^2$. 
Therefore there is some $C>0$ so that $B_{H_3}(n)$ injects into the quotient
$H_3 / H_3^{Cn^2}$, and so $\G_{H_3}(n) \preceq n^6$.

Now note that $B_{H_3}(5n)$ contains $z^i$ for $-n^2 \leq i \leq n^2$, as
$$
[x^n,y^{j}] z^{k} = z^{nj + k},
$$
has word length at most $5n$ for each $1 \leq j, k \leq n$.
Let $Q_n$ be a quotient detecting $B_{H_3}(5n)$.
Consider $w = x^a y^b x^c$.
Then $[w,y] = z^{a}$ and $[w,x] = z^{-b}$. 
If $w$ is trivial in $Q_n$ then both $[w,y]$ and $[w,x]$ are also trivial.
It follows that $w$ has nontrivial image in $Q_n$ for any values $0 <
a,b,c \leq n^2$.
Thus, $|Q_n| \geq n^6$, as desired.
\end{proof}

We now prove Theorem \ref{theorem:zl} from the introduction, which
generalizes the conclusion of Proposition \ref{prop:heisenberg} to a
large class of nilpotent groups.

For a finite $k$-tuple of elements $X = (x_1, \ldots, x_k)$ from a group, we will use $B_X^+(n)$ to denote the set
$$
B_X^+(n) = \{ x_1^{\alpha_1} \cdots x_k^{\alpha_k} : 0 \leq \alpha_i \leq n \} \subseteq G.
$$
Note that this is \emph{not} generally the same as the semigroup ball
of radius $n$.

\begin{proof} [Proof of Theorem \ref{theorem:zl}]
  Lemma \ref{lem:torsionlengths} demonstrates that $B_G(n)$ is fully detected by a quotient of
  the form $G / G^{Mn^c}$ for some $M>0$. We therefore have 
\[\G_G(n) \preceq \left(\prod_{i=1}^c n^{\dim(\zeta_i(G)/\zeta_{i-1}(G))}\right)^c = n^{\dim(G) c}.\]

To show the reverse inequality, we will show that for any positive integer $n$, there exists set of cardinality approximately $n^{\dim(G)c}$ that is fully detected by any finite quotient of $G$ that realizes $\G_G(n)$.  To this end, for each $i$, equip $\gamma_i(G)$ with a fixed generating set $X_i$.
 Let $Q$ be a quotient of $G$ that realizes $\G_G(n)$.  By \cite[Theorem 2.2]{MR1872804}, for any generating
  set of $\gamma_c(G)$ there is a constant $C> 0$ such that for every
  $h, h'$ in $\gamma_c(G)$, we have
$$
d_{\gamma_c(G)} (h,h') \leq C [d_G(h,h')]^{c}.
$$
Thus, the set $B_{\gamma_c(G)}(n^c /C)$ must inject into $Q$
as it is contained in $B_G(n)$.

To continue, fix a basis $B = (g_1, \ldots, g_k)$ obtained from the upper central series. 
For any $i$, let 
\[
B_i = \left\{ g_{j}\in B \mid g_j \in \zeta_i(G) \setminus
  \zeta_{i+1}(G), g_j \text{ nontorsion in } G/\zeta_i(G) \right\}.
\] 
Set $B^t$ to be the tuple consisting of elements from $B_i$ respecting the ordering of the basis.
That is, 
$$
B^t = (g_{a_1}, \ldots, g_{a_k}),
$$
where each entry is in some $B_i$ and $a_i < a_{i+1}$.
We claim that $B_{B^t}^+(D n^c)$ is fully detected by $Q$ for some $D > 0$.
To prove this claim, we will use the fact that if any element in the
normal closure of some $g\in G$ has nontrivial image in $Q$, then $g$
has nontrivial image in $Q$.
Let $x, y \in B_{B^t}^+(n^c)$ be elements with $x \neq y$.
There is some $i \leq c$ so that $y^{-1}x \in \zeta_i(G) \setminus 
\zeta_{i-1}(G)$. 
Set 
\[
E = \max \{ |B_j| \} \cdot \max \{ \| \gamma \|_{X_j} : \gamma \in
B_j \}.
\]
There is some $\gamma \in \zeta_{i-1}(G)$ so that $\| y^{-1}x \gamma
\|_{X_i} \leq E n^c$. 
This statement follows by reducing the word $y^{-1} x$ to normal form with respect to the basis.
Let $E_0$ be the largest constant $C_i$ output by Lemma
\ref{lem:technical} for $i = 1,\dotsc, c$.
By Lemma \ref{lem:technical},
$$
\| [y^{-1}x , x_1, \ldots, x_r ] \|_{X_c} \leq E_0 \| x y^{-1} \gamma \|_{X_i} \leq E_0 E n^c.
$$
It follows then that the set $B_{B^t}^+(n^c/ (C E_0 E) )$ is fully detected by $Q$.
Set $D = 1/(C E_0 E)$.
By the definition of a basis we have $|B_{B^t}^+( D n^c)| \geq (D n)^{c \dim(G)}$, so we get the desired inequality.
\end{proof}

\section{A general upper bound}
\label{MainProofSection}

The example $H_3 \times \Z$, which has full residual finiteness growth $n^7$, demonstrates that the conclusion of Theorem \ref{theorem:zl} does  not hold for any finitely generated nilpotent group.
In this section we prove Theorem \ref{MainTheorem}, providing a
technique that provides for {\em any} finitely generated nilpotent group an
explicit upper bound of $\G_G(n)$ of the form $n^d$. We first
illustrate the technique in an example in Proposition
\ref{D22example}, where we show moreover that the upper bound is sharp
in this example.

Let $U_n$ denote the group of upper triangular unipotent matrices in
$\SL_n(\Z)$. 
For $i\neq j$, let $e_{i,j}$ denote the elementary matrix differing
from the identity matrix only in that its $ij$-entry is 1.
We define the \emph{coordinates} of the tuple $(x_1, \ldots, x_k)$ to be the set
$\{ x_1, \ldots, x_k \}$.
Recall that a {\em terraced} filtration of $G$ is a filtration $1 = H_0 \leq
H_1 \leq \dotsb \leq H_{c-1}\leq G$ where each $H_i$ is a maximal normal
subgroup of $G$ satisfying $H_i\cap \gamma_{i+1}(G) = 1$.

\begin{proposition} \label{D22example}
  Consider elementary matrices $x = e_{1,4}$ and $y = e_{1,5}$ in
  $U_5$. Define a normal subgroup $N = \left< x, y \right> \leq
  U_5$ and set $\Gamma = U_5 / N$. Then $\G_{\Gamma}(n) \approx n^{22}.$
\end{proposition}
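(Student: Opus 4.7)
The plan is to prove matching upper and lower bounds $\G_\Gamma(n) \preceq n^{22}$ and $\G_\Gamma(n) \succeq n^{22}$. I would first unpack the structure of $\Gamma$: writing $a, b, c, d$ for the images of $e_{12}, e_{23}, e_{34}, e_{45}$ in $\Gamma$, the images of $e_{13}, e_{24}, e_{35}, e_{25}$ complete a Malcev basis (since the images of $e_{14}$ and $e_{15}$ vanish). Using the identities $[e_{ij}, e_{kl}] = e_{il}$ if $j = k$, $= e_{kj}^{-1}$ if $i = l$, and $= 1$ otherwise, one checks that $\Gamma$ has nilpotency class $c = 3$ and dimension $8$, with $\gamma_3(\Gamma) = \langle e_{25}\rangle$ of rank $1$ but $Z(\Gamma) = \langle e_{13}, e_{25}\rangle$ of rank $2$; the discrepancy $[Z(\Gamma) : \gamma_3(\Gamma)] = \infty$ is precisely what prevents direct application of Theorem~\ref{theorem:zl}.

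For the upper bound I would construct an explicit normal subgroup $K \trianglelefteq \Gamma$ with $|\Gamma/K| \approx n^{22}$ and $K \cap B_\Gamma(n) = \{1\}$. Let $K$ be the normal closure in $\Gamma$ of $\{a^{Cn^2}, b^{Cn^3}, c^{Cn^3}, d^{Cn^3}\}$ for a large constant $C$. Repeatedly applying the commutator identities shows that $K$ is in fact generated as a subgroup by these four elements together with $e_{13}^{Cn^2}$ (from $[a^{Cn^2}, b]$), $e_{24}^{Cn^3}$ (from $[b^{Cn^3}, c]$), $e_{35}^{Cn^3}$, and $e_{25}^{Cn^3}$ (from iterated commutators such as $[[b^{Cn^3}, c], d]$), so $|\Gamma/K| = C^8 n^{22}$. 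To verify $K \cap B_\Gamma(n) = \{1\}$, I would invoke the standard Malcev-normal-form lower bound $\|x_1^{\alpha_1}\cdots x_8^{\alpha_8}\|_X \gtrsim \max_i |\alpha_i|^{1/w(x_i)}$ (a consequence of the Pittet estimates already used in Lemma~\ref{lem:distortion}): a nontrivial $g \in K$ has some coordinate $\alpha_i$ equal to a nonzero multiple of $Cn^{t_i}$ in a basis direction of weight $w(x_i)$, and since the exponents $(t_a, t_b, t_c, t_d, t_{e_{13}}, t_{e_{24}}, t_{e_{35}}, t_{e_{25}}) = (2,3,3,3,2,3,3,3)$ satisfy $t_i \geq w(x_i)$ in every case, one has $\|g\|_X > n$ once $C$ is sufficiently large.

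For the lower bound, I would mirror the proof of Theorem~\ref{theorem:zl}, calibrating coordinate ranges to the weights of the central commutators available in $\Gamma$. Define
\[
S := \bigl\{ a^{\alpha_1} b^{\alpha_2} c^{\alpha_3} d^{\alpha_4} e_{13}^{\beta_1} e_{24}^{\beta_2} e_{35}^{\beta_3} e_{25}^{\gamma} : 0 \leq \alpha_1, \beta_1 < n^2/D,\ 0 \leq \alpha_2, \alpha_3, \alpha_4, \beta_2, \beta_3, \gamma < n^3/D \bigr\}
\]
for a large constant $D$, so $|S| \gtrsim n^{22}$. For any quotient $Q$ fully detecting $B_\Gamma(En)$ and distinct $g_1, g_2 \in S$, I would apply Lemma~\ref{lem:technical} to $g := g_1 g_2^{-1}$ at the first level of the upper central series $\zeta_1(\Gamma) \leq \zeta_2(\Gamma) \leq \zeta_3(\Gamma) = \Gamma$ containing $g$: either $g \in Z(\Gamma)$ already lies in $B_\Gamma(En)$, or a single (respectively double) commutator with generators from $X$ yields a nontrivial element of $Z(\Gamma)$ of $X$-length at most $En$. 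The explicit witnesses are $[a, b] = e_{13}$ at weight $2$, together with $[e_{24}, d] = e_{25}$, $[e_{35}, b] = e_{25}^{-1}$, $[b, c, d] = [d, c, b] = e_{25}$, and $[c, d, b] = e_{25}^{-1}$ at weight $3$; each witnesses a distinct possible nonzero coordinate direction of $g$.

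The main obstacle is the case analysis in the lower bound: one must verify that for every possible top nonzero coordinate direction of $g_1 g_2^{-1}$ across the three levels of the upper central series, some iterated commutator supplied by Lemma~\ref{lem:technical} is both nontrivial in $Z(\Gamma)$ and has $X$-length at most $En$. The asymmetric ranges in $S$ (namely $n^2$ for the $a$- and $e_{13}$-directions versus $n^3$ for the others) encode that any central commutator reaching $Z(\Gamma)$ through $a$ must land in the weight-$2$ line $\langle e_{13}\rangle$, never in the weight-$3$ line $\langle e_{25}\rangle$, and this asymmetry is precisely why the sharp exponent is $22 = 2\cdot 2 + 3\cdot 6$ rather than the naive $c \cdot \dim \Gamma = 24$.
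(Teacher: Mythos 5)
Your proposal is correct and takes essentially the same route as the paper: an explicit normal subgroup of powers with exponents $n^2$ in the $e_{1,2}$- and $e_{1,3}$-directions and $n^3$ elsewhere for the upper bound, and a product box of size $\approx n^{22}$ with the same asymmetric ranges for the lower bound, separated by pushing differences into $Z(\Gamma)$ via commutators (Lemmas \ref{lem:commutatorproduct} and \ref{lem:technical}) and the distortion estimate of Lemma \ref{lem:distortion}. The case analysis you defer does go through, with the one caveat that in the mixed case (central witness $e_{1,3}^{s}e_{2,5}^{t}$ with $|s|\lesssim n^2$, $|t|\lesssim n^3$) you must bound the two components separately by Lemma \ref{lem:distortion}, as the paper does, rather than rely on the single exponent $t$ appearing in the quantitative conclusion of Lemma \ref{lem:technical}.
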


\begin{proof}
  Set $H_3 = \Gamma$ and $H_2 = \left< e_{1,2}, e_{1,3} \right>$, and
  let $H_0 = H_1 = 1$. 
  Note that $1=H_0\leq H_1 \leq H_2 \leq \Gamma$
  forms a terraced filtration of $\Gamma$.
  Define two tuples of elements of $\Gamma$ by $X_3 = (e_{1,3} ,
  e_{1,2})$ and $X_2 = (e_{2,5}, e_{2,4}, e_{3,5}, e_{2,3}, e_{3,4},
  e_{4,5})$. For each $i=2,3$, let $Y_i$ be the set of coordinates of
  $X_i$. Clearly $Y = Y_2 \cup Y_3$ generates $\Gamma$. 

  To establish the upper bound, let $Q$ be a quotient of $\Gamma$ detecting $B_\Gamma(n)$.
  Each
  of $H_3^{n^3}$ and $H_2^{n^2}$ is normal in $\Gamma$, so we can define a normal
  subgroup $N = H_3^{n^3} H_2^{n^2} \leq \Gamma$. 
  A simple induction shows that if $g\in B_\Gamma(n)$ then $\lvert
  g_{ij} \rvert \leq n^{j-i}$. In particular this implies that there
  is some $C>0$ so that $B_\Gamma(Cn)$ is fully detected by
  $G/N$. Since $\lvert G/N \rvert \approx n^{22}$, this establishes
  the desired upper bound on $\G_\Gamma(n)$.
  
  To establish the lower bound, define the \emph{depth} of an element
  $\gamma \in \Gamma$ to be the maximal $i$ with
  $\gamma \notin \zeta_i(\Gamma)$.  Order the elements $Y$ in a tuple
  $(y_1, y_2, \ldots, y_8)$ of non-increasing depth.  Set $B^+(n)$ to
  be
  $$
  \left\{ \prod_{i=1}^8 y_i^{\alpha_i}  : 0 \leq \alpha_i \leq n^2 \text{ if $y_i \in Y_2$ and } 0 \leq \alpha_i \leq n^3 \text{ otherwise} \right\}.
  $$
  We claim that there exists $C >0$ such that any quotient $Q$ in which $B_\Gamma(Cn)$ embeds restricts to $B^+(n)$ as an injection.
  This gives the desired lower bound, as $|B^+(n)| \geq n^{22}$.
  To see this claim, let $x,y$ be distinct elements in $B^+(n)$.
  Set $i$ to be the depth of $y^{-1}x$.
  We break up the rest of the proof of this claim into cases depending on $i$.
  
  If $i = 0$, then
  $y^{-1} x$ is in the center of $\Gamma$ and we have
  $$
  y^{-1} x = e_{1,2}^{a_1} e_{2,5}^{a_2},
  $$
  where $|a_1| \leq n^2$ and $|a_2| \leq n^3$.
  Note that $e_{1,2}\in \gamma_2(\Gamma)$ and $e_{2,5}\in \gamma_3(\Gamma)$.
  Applying Lemma \ref{lem:distortion} twice, we have that
  $$
  \| y^{-1} x\|_\Gamma \leq \| e_{1,2}^{a_1} \|_\Gamma + \|
  e_{2,5}^{a_2} \|_\Gamma \leq C n,
  $$
  for some $C > 0$, independent of $n$.
  Thus $y^{-1} x$ cannot vanish in any quotient that fully detects $B^+(Cn)$.
  
  If $i = 1$, then by definition, we may write
  $$
  y^{-1} x = e_{1,2}^{a_1} e_{2,4}^{a_2} e_{3,5}^{a_3} \gamma,
  $$
  where $\gamma \in \zeta_i(\Gamma)$, $|a_1| \leq n^2$, $|a_2| \leq n^3$, and $|a_3| \leq n^3$.
  Since this $y^{-1}x$ is not in the center, there exists $z \in Y$ such that 
  $$
  [e_{1,2}^{a_1} e_{2,4}^{a_2} e_{3,5}^{a_3} \gamma, z] \neq 1.
  $$
  This element is now in the center. Thus, by Lemma \ref{lem:commutatorproduct}, we have
  $$
  [e_{1,2}^{a_1} e_{2,4}^{a_2} e_{3,5}^{a_3} \gamma, z]
  = 
  [e_{1,2}, z]^{a_1}
  [e_{2,4}, z]^{a_2}
  [e_{3,5}, z]^{a_3}.
  $$
  Now by Lemma \ref{lem:distortion} applied three times, we see that the word length of
  $[e_{1,2}^{a_1} e_{2,4}^{a_2} e_{3,5}^{a_3} \gamma, z]$
  is less than a constant multiple of $n$, where the constant does not depend on $n$.
  Thus $y^{-1} x$ cannot vanish in any quotient that fully detects $B^+(Cn)$ for some  $C > 0$ independent of $n$.
  
  If $i = 2$, then by definition, we may write
  $$
  y^{-1} x = e_{2,3}^{a_1} e_{3,4}^{a_2} \gamma,
  $$
  where $\gamma \in \zeta_i(\Gamma)$, $|a_1|, |a_2| \leq n^3$.
  Suppose, without loss of generality, that $a_1 \neq 0$.
  Then, using Lemma \ref{lem:commutatorproduct}, we have that there exists $\gamma' \in \gamma_1(\Gamma)$ such that
  $$
  [y^{-1} x, e_{3,4}] = [e_{2,3}, e_{3,4}]^{a_1} [e_{3,4}, e_{3,4}]^{a_2} \gamma' = e_{2,4}^{a_1} \gamma'.
  $$
  Now it is clear that there exists $z \in \Gamma$ such that
  $$
  [[y^{-1} x, e_{3,r}], z] = [e_{2,4}^{a_1} \gamma', z] \neq 1.
  $$
  We can proceed as in case $i=1$ to achieve the desired conclusion.
  Indeed, Lemma \ref{lem:distortion} applies, giving that $y^{-1} x$ is detected if $B(Cn)$ is fully detected for some constant $C > 0$ independent of $n$.
  That is, we cannot have $y^{-1}x =1$ in $Q$, if $Q$ detects $B_\Gamma(Cn)$.
  The claim then follows, ending the proof.
  
\end{proof}

We now prove Theorem \ref{MainTheorem}.

\begin{proof}[Proof of Theorem \ref{MainTheorem}]
Let $G$ be a finitely generated nilpotent group and suppose $1=H_0
\leq H_1 \leq \dotsb \leq H_{c-1}\leq G$ is a terraced filtration. Set
$H_c = G$. 

Choose a basis $X_1$ of $H_1$. Inductively construct
tuples $X_2,\dotsc, X_c$ by setting $X_i$ to be a pull-back of a basis
for $H_i / H_{i-1}$.
Set $Y_i$ to be the set of all coordinates of $X_i$ and $Y = \cup_i Y_i$.
It is clear from the construction that $Y$ is generating set for
$G$. Note also that for any $n \in \N$, the subgroup 
\[
N(n) = \prod_{i=1}^c \left<
  y^{n^k} : y \in \left<  Y_1 \cup Y_2 \cup \dotsb \cup Y_k
  \right> \right>
\]
is normal in $G$.

We now claim that there exists a constant $D \in \N$ so that for any
$n \in \N$, the ball $B_Y(n)$ is detected by $G/N(Dn)$.
To prove the claim, let $f, M \in \N$ be as in Lemma~\ref{lem:torsionlengths}.
Then $G^{fM}$ is torsion-free; let $K = G^{f M}$.
Fix a finite generating set $T$ for $K$.
For each $i$ and any $n\in \N$, Lemma \ref{lem:torsionlengths} gives
that any element $g\in K^n \cap H_i$ has word length at least $C_i
n^{1/t_i}$ with respect to $T$.
Thus we have that  there exists $D_0 > 0$ such that
$B_T(D_0 n)$ is fully detected by $K/N(fMn)$.
Further, since $K$ is of finite index in $G$, we have $D_1 > 1$ such that for any $g \in K$,
$$
\| g \|_T \leq D_1 \| g \|_Y \leq D_1^2  \| g \|_T.
$$
Therefore, as $N(fMn)$ is contained in $K$, any singleton contained in $B_Y(n/D_1)$ is fully detected by $G/N(fMn)$ and so $B_Y(n/(2 D_1))$ is fully detected by $G/N(fMn)$.
This proves the claim, as we can select $D = 2D_1 fM$.

We will now demonstrate that the order of
$G/N(Dn)$ is dictated by a single polynomial of the form $n^b$ for 
\[
b = \sum_{k=1}^c k\cdot \dim(H_k / H_{k-1}).
\]
Set $G_k = H_k/ H_{k-1}$. It is apparent from the definition of $N(Dn)$ the index of $N(Dn)$ in $G$ is bounded above by
$$
\prod_{k=1}^{c} | G_k / G_k^{D^k n^{k}} |.
$$
By the construction of $D$, the subgroup $G_k^D$ is torsion-free in $G_k$.
Thus, it is clear that $|G_k^D / G_k^{D^{k} n^{k}}|$ has order
$D^{\dim(G_k) (k-1)}  n^{k \dim(G_k)}$.
This gives an upper bound for the index of $N(Dn)$ in $G$ of the form
$C_0 n^{\sum_{k=1}^c k \dim(G_k)}$, where $C_0 >0$ does not depend on $n$.

One can check that $b = c\dim(G) - \sum_{i=1}^{c-1}\dim(H_i)$ using
the general fact that $\dim(G/H) = \dim(G)- \dim(H)$ for any finitely
generated nilpotent group $G$ with normal subgroup $H$. This 
completes the proof since $G/N(Dn)$ detects $B_Y(Cn)$.
\end{proof}

We conclude with an example that shows that the upper bound to $\G_G$
given by Theorem \ref{MainTheorem} generally may depend on choice of
terraced filtration. Consider the group
$\tilde G = U_3 \times U_4 \times U_5$, which is nilpotent of class
$c=4$.  There is an isomorphism
$Z(\tilde G) \cong Z(U_3) \times Z(U_4) \times Z(U_5)$. Under
identifications $Z(U_3)\cong Z(U_4) \cong Z(U_5) \cong \Z$, define an
infinite cyclic subgroup
\[
Z = \{ (x,y,z)\in Z(U_3)\times Z(U_4)\times Z(U_5) \mid x=y=z \} \leq
Z(\tilde G).
\]
Let $G = \tilde G / Z$ and let $\pi: \tilde G \to G$ be the quotient
map. Then $\pi$ restricts to an isomorphism $Z(U_3)\times Z(U_4) \cong
Z(G)$. Under this identification, the last term of the lower central
series of $G$ is
\[
\gamma_4(G) = \{ (x,y)\in Z(U_3)\times Z(U_4) \mid x=y \}.
\]
Since $\gamma_3(G)$ contains the image of $Z(U_4)$, we see that
$Z(G) \leq \gamma_3(G)$. Since $H\cap
Z(G)$ is nontrivial for any nontrivial normal subgroup $H\leq G$, it follows that
$H_2$ is trivial for any terraced filtration of $G$.

Now define $H_0 = H_1 = H_2 = 1$ and $H_3 = \pi(U_3)$, and
$H_0' = H_1' = H_2' = 1$ and $H_3' = \pi(U_4)$. It is easy to see that
both $\pi(U_3)$ and $\pi(U_4)$ are maximal normal subgroups of $G$
whose intersection with $\gamma_4(G)$ is trivial. It follows from the
above comments that
\[
H_0\leq H_1 \leq H_2 \leq H_3 \leq G \quad \text{ and } \quad H_0'
\leq H_1' \leq H_2' \leq H_3' \leq G
\]
are terraced filtrations of $G$. However these filtrations give
different upper bounds for $\G_G$ because $\dim( \pi(U_3) ) = 3$ while
$\dim( \pi(U_4) ) = 6$.

\bibliography{fullrfgrowthnilIJ}
\bibliographystyle{ijmart}


\noindent
Khalid Bou-Rabee \\
Department of Mathematics, CCNY CUNY \\
E-mail: khalid.math@gmail.com \\

\noindent
Daniel Studenmund \\
Department of Mathematics, University of Chicago \\
E-mail: dhs@math.uchicago.edu \\

\end{document}